\newtheorem{theorem}{Theorem}[section]
\newtheorem{corollary}[theorem]{Corollary}
\newtheorem{lemma}[theorem]{Lemma}
\newtheorem{remark}[theorem]{Remark}
\newtheorem*{remarks*}{Remarks}
\newtheorem{proposition}[theorem]{Proposition}
\newtheorem{conjecture}[theorem]{Conjecture}
\newcommand\NN{\mathbb{N}}
\newcommand\RR{\mathbb{R}}
\newcommand\CB{{\mathcal B}}
\newcommand\CF{{\mathcal F}}
\newcommand\CH{{\mathcal H}} 
\newcommand\CI{{\mathcal I}}
\newcommand\CS{{\mathcal S}}
\newcommand\pp{\mathbf{p}}
\newcommand\eps{\varepsilon}
\title[Random subsets of Cantor sets]{Random subsets of Cantor sets generated by trees of coin flips}
\author[Pieter Allaart]{Pieter Allaart$^*$}
\address[P. Allaart]{Mathematics Department, University of North Texas, 1155 Union Cir \#311430, Denton, TX 76203-5017, U.S.A.}
\email{allaart@unt.edu}
\thanks{$^*$ Corresponding author.}
\author{Taylor Jones}
\address[T. Jones]{Applied Mathematics \& Statistics Department, Johns Hopkins University, 3400 North Charles St, Baltimore, MD 21218-2683, U.S.A.}
\email{rjone197@jh.edu}
\begin{document}

\begin{abstract}
We introduce a natural way to construct a random subset of a homogeneous Cantor set $C$ in $[0,1]$ via random labelings of an infinite $M$-ary tree, where $M\geq 2$. The Cantor set $C$ is the attractor of an equicontractive iterated function system $\{f_1,\dots,f_N\}$ that satisfies the open set condition with $(0,1)$ as the open set.
For a fixed probability vector $(p_1,\dots,p_N)$, each edge in the infinite $M$-ary tree is independently labeled $i$ with probability $p_i$, for $i=1,2,\dots,N$. Thus, each infinite path in the tree receives a random label sequence of numbers from $\{1,2,\dots,N\}$. We define $F$ to be the (random) set of those points $x\in C$ which have a coding that is equal to the label sequence of some infinite path starting at the root of the tree. 

The set $F$ may be viewed as a statistically self-similar set with extreme overlaps, and as such, its Hausdorff and box-counting dimensions coincide. We prove non-trivial upper and lower bounds for this dimension, and obtain the exact dimension in a few special cases. For instance, when $M=N$ and $p_i=1/N$ for each $i$, we show that $F$ is almost surely of full Hausdorff dimension in $C$ but of zero Hausdorff measure in its dimension. 

For the case of two maps and a binary tree, we also consider deterministic labelings of the tree where, for a fixed integer $m\geq 2$, every $m$th edge is labeled $1$, and compute the exact Hausdorff dimension of the resulting subset of $C$.
\end{abstract}

\subjclass[2010]{Primary: 28A78}

\keywords{Cantor set, Hausdorff dimension, Statistically self-similar set, Random subset}

\maketitle

\section{Introduction}

Consider an iterated function system (IFS) $\{f_1,\dots,f_N\}$ consisting of $N$ similar contractions $f_i: [0,1]\to[0,1]$, $i=1,2,\dots,N$ such that $f_i((0,1))\cap f_j((0,1))=\emptyset$ for all $i\neq j$. In other words, $\{f_1,\dots,f_N\}$ satisfies the open set condition with the open set being an interval. We assume furthermore that the IFS is equicontractive; that is, there is a constant $0<r\leq 1/N$ such that
\[
|f_i(x)-f_i(y)|=r|x-y| \qquad \mbox{for all $x,y\in[0,1]$, \ \ $i=1,2,\dots,N$}.
\]
The attractor of the IFS $\{f_1,\dots,f_N\}$ is a nonempty compact set $C$ satisfying $C=\bigcup_{i=1}^N f_i(C)$. When $r<1/N$, $C$ is a Cantor set with
\[
\dim_H C=\dim_B C=-\frac{\log N}{\log r},
\]
where $\dim_H$ and $\dim_B$ denote Hausdorff and box-counting dimensions, respectively.
When $r=1/N$, $C$ is simply the unit interval $[0,1]$.

We are interested in certain random subsets of $C$ that are generated by trees of coin flips. The idea -- made precise in the next section but outlined here for the special case $N=2$ -- is simple: In a full infinite binary tree, we randomly label each edge with either $1$ with probability $p$ or $2$ with probability $1-p$, where $p\in(0,1)$ is a fixed parameter. Each path down the tree from the root then has a random infinite label sequence which is the coding of a point in $C$. We let $F$ denote the random subset of $C$ of all points obtained in this way. We show that when $p=1/2$, $F$ has almost surely the same Hausdorff dimension as $C$ itself, but with zero Hausdorff measure in its dimension. In particular, when $r=1/2$, our construction generates a random subset of the unit interval which  has Lebesgue measure zero but full Hausdorff dimension one almost surely. When $p\neq 1/2$, we obtain instead a random subset of $C$ with strictly smaller Hausdorff (and upper box-counting) dimension. For this biased case we are, unfortunately, only able to obtain rough (but non-trivial) bounds on the dimension. These are given in Theorem \ref{thm:dimension-bounds} below, which we prove in Sections \ref{sec:lower-bound} and \ref{sec:upper-bound}. In fact, we develops our bounds for arbitrary $N\geq 2$ and for random subsets obtained by random labelings of $M$-ary trees, where $M\geq 2$.

The random subset $F$ may be viewed as a statistically self-similar set with extreme overlaps, as we explain in the next section. Statistically self-similar sets were introduced in the 1980s by Falconer \cite{Falconer}, Graf \cite{Graf} and Mauldin and Williams \cite{Mauldin and Williams}, and have since been studied by many other authors. In almost all of these papers, some separation condition, such as the open set condition, is assumed. While there is a significant body of literature on dimension computations for deterministic self-similar sets with overlaps, little appears to be known about the random case in the absence of a separation condition. With this article, we hope to make a modest contribution to this line of research, although we did not manage to compute the exact dimension of $F$.

We observe that our setup is essentially the same as that of Benjamini {\em et al.}~\cite{BGS}. However, their principal focus was on the overlapping case (i.e. $r>1/N$), and instead of the random set $F$, they were mainly interested in the stochastically self-similar measure $\mu$ supported by $F$. This random measure $\mu$ is in fact precisely the measure $m_\omega$ that we use in the proof of the lower bound; see Section \ref{sec:lower-bound}. Benjamini {\em et al.}~do not give dimension estimates for $r<1/N$, and indeed, our results here are nearly disjoint from theirs.

We also consider, in Section \ref{sec:deterministic}, a deterministic version of our randomization scheme, corresponding to a sequence of coin flips in which every $m$th flip is heads, and compute exactly the Hausdorff dimension of the resulting subset of $C$. Interestingly, this dimension is not strictly monotone in $m$.

\subsection{Models of random fractals}

Following Hutchinson's result \cite{Hutchinson} on the Hausdorff dimension of self-similar fractals generated by an IFS satisfying the open set condition, there was a rapid rise in analogous results on a broad class of random constructions known as {\it statistically self-similar fractals}.  Below, we discuss a few models from the literature for constructing a random Cantor set in the line.
The randomness we are interested in resides in the process of deletion of intervals, as opposed to position or length of intervals (considered in \cite{Larsson, Koivusalo}).

Perhaps the best-known model of random Cantor sets begins with a partition of the unit interval into $M\ge2$ interior disjoint subintervals of length $1/M$, and a vector of probabilities $\mathbf{p}=(p_1,\dots,p_M)\in[0,1]^M$ (not necessarily a probability vector).
At stage $1$, the $i$th interval in the partition is either kept or discarded with probabilities $p_i$ and $1-p_i$ respectively. Within each non-discarded subinterval this process is repeated proportionally, and independently of past stages. The limit set, conditioned on non-extinction, results in a random Cantor-like fractal $F$, say.
In \cite{Falconer, Graf, Mauldin and Williams}, it was determined independently that
$\text{dim}_HF=\frac{1}{M}\log\big(\sum_{i=1}^{M}p_i\big)$
almost surely on $F\ne\varnothing$. 
This model was also used in \cite{Dekking and Karoly} to study the question of when the arithmetic difference of two random Cantor sets contains an interval.

Another model for constructing random Cantor sets, which removes the need for conditioning on non-extinction, is to fix a sequence of integers $N_1,N_2,\dots$ for which $1\le N_k\le M$ for all $k$. At each stage $k$ in the construction, and within each interval remaining at stage $k-1$, randomly and independently choose $N_k$ subintervals of size $M^{-k}$, then discard the rest.
This was used in \cite{Chen}, where the almost sure Hausdorff dimension was computed, as well as a close variation of it in \cite{Shmerkin and Suomala} in connection to tube null sets.

The random Cantor sets we consider here will also never go extinct, but our construction involves a complex form of interdependence which implies that the number of ``basic intervals" remaining at level $n$ is neither Markovian nor a martingale, even after appropriate scaling. As a result, traditional martingale approaches or branching arguments are not applicable. In fact, we could think of only one natural (random) mass distribution on our random Cantor subset, which led us to a lower bound that is (for the biased coin case) unfortunately significantly below the best upper bound we could find.

We point out that our construction also differs from the $V$-variable fractals of \cite{Barnsley}, and their generalization in \cite{Jarvenpaa} to random code tree fractals.


\section{Notation and main result} \label{sec:main-result}

For definiteness we assume that $f_i([0,1])$ lies to the left of $f_j([0,1])$ when $i<j$. Further, for ease of presentation, we will also assume that each $f_i$ is orientation preserving; in other words, there are constants $b_1,\dots,b_N$ such that $f_i(x)=rx+b_i$, for $i=1,\dots,N$. However, this is not essential for our main results; see Remark \ref{rem:other-orientations} below.

Our probability space will be $\Omega:=\{1,\dots,N\}^\NN$ equipped with its Borel $\sigma$-algebra $\mathcal{B}$ and a probability measure $\mu_{\mathbf{p}}$ to be described below. We also write $\Omega_n:=\{1,\dots,N\}^n$ for $n\in\NN$, and $\Omega^*:=\bigcup_{n\in\NN}\Omega_n$. 
For any finite sequence $\omega^*=(\omega_1,\dots,\omega_n)\in\Omega^*$ with $|\omega^*|=n$ we define the basic open set, or cylinder,
$$[\omega^*]:=\{\omega\in\Omega:\omega|_n=\omega^*\},$$
where $\omega|_n$ denotes the restriction of $\omega$ to the first $n$ letters. Let $\CS:=\{[\omega^*]:\omega^*\in\Omega^*\}$. 

Denote by $\mathcal{P}$ the set of all probability vectors $\mathbf{p}=(p_1,\dots,p_N)\in[0,1]^N$ such that $\sum_{i=1}^N p_i=1$.
For $\pp=(p_1,\dots,p_N)\in\mathcal{P}$ we define a premeasure $\mu_\pp$ on $\CS$ by
$$\mu_\pp([\omega^*]):=\prod_{i=1}^N p_i^{\#\{k\leq n\::\:\omega_k^*=i\}}, \qquad \mbox{for}\ \omega^*=(\omega_1^*,\dots,\omega_n^*),$$
and observe that $\mu_\pp$ extends uniquely to a probability measure on $(\Omega,\CB)$ which we again denote by $\mu_\pp$.

Next, we consider a second integer parameter $M\geq 2$, and let
$$\CI_n:=\{1,\dots,M\}^n \quad (n\in\NN), \qquad \CI^*:=\bigcup_{n=0}^\infty \CI_n, \qquad \CI:=\{1,\dots,M\}^\infty.$$
Here we set $\CI_0:=\{\epsilon\}$, where $\epsilon$ denotes the empty word.
We define an order $<_*$ on $\CI^*$ in a natural way: For any pair of words $\mathbf{i}=(i_1,\dots,i_m)$ and $\mathbf{j}=(j_1,\dots,j_n)$ in $\CI^*$,
\begin{enumerate}
    \item $\mathbf{i}<_*\mathbf{j}$ if $m<n$, and
    \item $\mathbf{i}<_*\mathbf{j}$ if $m=n$ and $\mathbf{i}<_{\text{LEX.}}\mathbf{j}$ in the lexicographic order.
\end{enumerate}
We think of $<_*$ as ordering a binary tree from top to bottom, left to right.

Next, for each $\mathbf{i}\in\CI^*$ we define $\kappa:\CI^*\to\NN_0:=\NN\cup\{0\}$ by
$$\kappa(\mathbf{i})=\#\{\mathbf{j}\in\CI^*:\mathbf{j}<_*\mathbf{i}\}.$$
Note that $\kappa$ is well-defined as there are but finitely many words less than $\mathbf{i}$ in the $<_*$ order.
For instance, when $M=2$, the first few values of $\kappa$ are $\kappa(1)=0$, $\kappa(2)=1$, $\kappa(1,1)=2$, $\kappa(1,2)=3$, $\kappa(2,1)=4$, etc. 
Observe that $\kappa$ defines an order-preserving bijection from $(\CI^*,<_*)$ to $(\NN_0,<)$.

Now we define the collection of random variables $X_{\mathbf{i}}:\Omega^*\to\{1,\dots,N\}$ by
$$X_{\mathbf{i}}(\omega)=\omega_{\kappa(\mathbf{i})}, \qquad \mathbf{i}\in\CI^*.$$
These will be the random labels of the edges in the $M$-ary tree.

For any $\omega\in\Omega$, we define a subset of $C$ by
$$F(\omega):=\Bigg\lbrace x=\lim_{n\to\infty} f_{X_{\mathbf{i}|_1}(\omega)}\circ f_{X_{\mathbf{i}|_2}(\omega)}\circ \dots\circ f_{X_{\mathbf{i}|_n}(\omega)}([0,1]):\mathbf{i}=(i_1,i_2,\dots)\in\CI\Bigg\rbrace.$$
(We emphasize that due to the randomization, different choices of the path $\mathbf{i}$ could yield the same point $x$ in the limit set $F(\omega)$.)

\subsection{$F$ as the limit of a branching random walk}
To help visualize the random subset $F(\omega)$, the reader may wish to imagine a kind of branching random walk. We illustrate this for the case $M=N=2$. The walk begins with a single ancestor at time $0$ occupying the interval $[0,1]$. At each stage $n\geq 0$, each individual of generation $n$ splits into two descendants of generation $n+1$. If the individual occupies the interval $I$, its descendants, independently of each other, move either to the left subinterval $f_1(I)$ with probability $p_1$, or the right subinterval $f_2(I)$ with probability $p_2$. (Basically, this process is a ``branching random walk with exponentially decreasing steps", as in \cite{BGS}.)
Note that at stage $n$ there are $2^n$ individuals, collectively occupying some (random) subcollection of the intervals $I_{\mathbf{i}}, \mathbf{i}\in\{1,2\}^n$. Let $E_n$ denote the union of the occupied intervals; then $F(\omega)=\bigcap_{n=1}^\infty E_n$.

The difficulty in computing the dimension of $F(\omega)$ stems from the fact that, in order to understand the probability law of the branching random walk below stage $n$, it is not sufficient to know merely the number of occupied intervals at stage $n$, but in fact one needs to know the individual occupancy rates for each of the stage $n$ intervals. As a result, we are able to obtain the exact dimension only for a few special cases.

\subsection{$F$ as a statistically self-similar set} \label{subsec:self-similar}
We can also think of $F$ as a statistically (or stochastically) self-similar set with extreme overlaps. The general setup of statistically self-similar sets, introduced by Graf \cite{Graf}, is as follows: Let $Sim$ denote the set of all similarities in $\RR$, with the topology of uniform convergence, and let $\nu$ be a Borel probability measure on $Sim^M$. Consider the space $\widetilde{\Omega}:=(Sim^M)^{\CI^*}$, with the product measure $\nu^{\CI^*}$. For $\mathbf{i}\in \CI^*$ and $\tilde{\omega}\in\widetilde{\Omega}$, put $S_{\mathbf{i},j}(\tilde{\omega}):=\tilde{\omega}(\mathbf{i})_j$, $j=1,2,\dots,M$. When $\mathbf{i}=\epsilon$, we write simply $S_j$ instead of $S_{\epsilon,j}$.
Under $\nu^{\CI^*}$, the random $M$-tuples of maps $(S_{\mathbf{i},1},\dots,S_{\mathbf{i},M})$, $\mathbf{i}\in\CI^*$, are mutually independent and all have distribution $\nu$. Now define the random mapping $\pi_{\tilde{\omega}}: \CI\to\RR$ by 
\[
\pi_{\tilde{\omega}}(\mathbf{i}):=\lim_{n\to\infty} S_{\mathbf{i}|_1}(\tilde{\omega})\circ\dots \circ S_{\mathbf{i}|_n}(\tilde{\omega})(0),
\]
where $\mathbf{i}|_k$ denotes the prefix $i_1\dots i_k$ of the word $\mathbf{i}=i_1i_2\dots$. Finally, set $K(\tilde{\omega}):=\pi_{\tilde{\omega}}(\CI)$. The set $K(\tilde{\omega})$ is statistically self-similar in the sense that
\[
K(\tilde{\omega})=\bigcup_{j=1}^M S_j(\omega)(K_j(\tilde{\omega})),
\]
where $K_1,\dots,K_M$ are independent copies of $K(\tilde{\omega})$ that are also independent of $(S_1,\dots,S_M)$. If we take $\nu$ to be the product measure
\begin{equation} \label{eq:product-measure}
\nu=\bigotimes_{i=1}^M (p_1\delta_{f_1}+\dots+p_N \delta_{f_N}),
\end{equation}
then $K(\tilde{\omega})$ has the same probability distribution as $F(\omega)$. Note that for any $\tilde{\omega}\in\widetilde{\Omega}$ and any two words $\mathbf{i}, \mathbf{j}\in\CI^*$ of equal length, either $S_{\mathbf{i}}(\tilde{\omega})(0,1)\cap S_{\mathbf{j}}(\tilde{\omega})(0,1)=\emptyset$ or $S_{\mathbf{i}}(\tilde{\omega})(0,1)=S_{\mathbf{j}}(\tilde{\omega})(0,1)$. Thus, $F(\omega)$ is a statistically self-similar set with ``extreme overlaps".

It was shown by Falconer \cite{Falconer1} (see also \cite[Chapter 3]{Falconer-techniques}) that for any {\em deterministic} self-similar set, with or without overlaps, the Hausdorff and box-counting dimensions coincide. More recently, Liu and Wu \cite{Liu-Wu} proved that the same is true for statistically self-similar sets, under a mild condition which is trivially satisfied by the measure $\nu$ in \eqref{eq:product-measure}. Moreover, the dimension is almost surely constant. Therefore, we obtain the following result:

\begin{proposition}
There is a constant $a\geq 0$ such that, $\mu_\pp$-almost surely, $\dim_H F=\underline{\dim}_B F=\overline{\dim}_B F=a$.
\end{proposition}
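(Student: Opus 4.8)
The plan is to reduce the statement entirely to the cited theorem of Liu and Wu \cite{Liu-Wu}, after identifying $F$ with a bona fide statistically self-similar set. First I would note that each of the three functionals $\dim_H$, $\underline{\dim}_B$ and $\overline{\dim}_B$ is a Borel-measurable function of a compact subset of $[0,1]$ (with respect to the Hausdorff metric), so each defines a genuine random variable on our probability space. Since Section \ref{subsec:self-similar} shows that, with $\nu$ chosen as in \eqref{eq:product-measure}, the set $K(\tilde\omega)$ has the same distribution as $F(\omega)$, it suffices to establish the proposition for $K$; equivalently, I may invoke results for statistically self-similar sets directly for $F$.

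Next I would verify that $\nu$ satisfies the mild hypothesis of \cite{Liu-Wu}. Every similarity in the support of $\nu$ is one of the finitely many maps $f_1,\dots,f_N$, each with contraction ratio exactly $r\in(0,1)$; in particular the random ratios take only finitely many values and are uniformly bounded away from $0$ and $1$. Consequently any integrability or non-degeneracy requirement on $\nu$ (finiteness of the relevant logarithmic moments, positivity of the branching, and so on) is automatic. With the hypothesis in force, the Liu--Wu theorem yields, $\mu_\pp$-almost surely, the equality $\dim_H F=\underline{\dim}_B F=\overline{\dim}_B F$, extending to the present random setting the deterministic result of Falconer \cite{Falconer1} (see also \cite[Chapter 3]{Falconer-techniques}).

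It remains to record the almost-sure constancy, which is again part of \cite{Liu-Wu} but which I would justify by a short zero-one law. Write $D:=\dim_H K$ (equivalently $\dim_H F$, since the two sets share a law). The statistical self-similarity $K=\bigcup_{j=1}^M S_j(K_j)$ from Section \ref{subsec:self-similar}, in which $K_1,\dots,K_M$ are independent copies of $K$ and the $S_j$ are similarities, hence dimension-preserving, gives the distributional identity $D\stackrel{d}{=}\max_{1\le j\le M}D_j$ with $D_1,\dots,D_M$ independent copies of $D$. If $G$ denotes the distribution function of $D$, this forces $G(t)=G(t)^M$ for every $t$, so $G(t)\in\{0,1\}$ and $D$ equals a constant $a\geq 0$ almost surely. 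The same reasoning applies to the box-counting dimensions, and combining with the previous paragraph yields $\dim_H F=\underline{\dim}_B F=\overline{\dim}_B F=a$ almost surely.

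I do not anticipate a serious obstacle, as the proposition is in essence a citation; the only points demanding care are the measurability of the dimension functionals as random variables and the explicit check that $\nu$, being supported on finitely many $M$-tuples of maps with a common contraction ratio, genuinely meets the hypotheses of \cite{Liu-Wu}.
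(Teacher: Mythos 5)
Your proposal is correct and follows essentially the same route as the paper, which likewise obtains the proposition by identifying $F$ with the statistically self-similar set $K(\tilde\omega)$ of Subsection \ref{subsec:self-similar} and citing Liu and Wu \cite{Liu-Wu} (whose mild hypothesis is trivially met since $\nu$ is supported on finitely many maps of common ratio $r$), with the almost-sure constancy also attributed to \cite{Liu-Wu}. Your added zero-one law via $D\stackrel{d}{=}\max_j D_j$ is a harmless self-contained supplement; just note that lower box dimension is not finitely stable, so for $\underline{\dim}_B$ you should deduce constancy from the already-established almost-sure equality with $\dim_H F$ rather than from the max-recursion directly.
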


Note that for all $\omega$ we have $F(\omega)\subseteq C$. On the other hand, $F$ can always be covered by at most $M^n$ intervals of length $r^{-n}$ for each $n$. Thus, we have the trivial upper bound
\begin{equation} \label{eq:trivial-upper-bound}
\overline{\dim}_B F(\omega)\leq \min\left\{-\frac{\log N}{\log r},-\frac{\log M}{\log r}\right\}.
\end{equation}
Our goal is to establish the following more precise dimension bounds:

\begin{theorem} \label{thm:dimension-bounds}
\begin{enumerate}[(a)]
\item {\em (Lower bound)} We have, $\mu_\pp$-almost surely,
\[
\dim_H F\geq \min\left\{-\frac{\log M}{\log r},\frac{\log(p_1^2+\dots+p_N^2)}{\log r}\right\}.
\]
\item {\em (Upper bound)} If 
\begin{equation} \label{eq:M-sandwich}
\prod_{i=1}^N p_i^{-p_i}\leq M\leq \left(\prod_{i=1}^N p_i\right)^{-1/N},
\end{equation}
then there is a unique $\lambda\in[0,1]$ satisfying
\begin{equation} \label{eq:mu-equation}
\sum_{i=1}^N p_i^\lambda\log(Mp_i)=0,
\end{equation}
and
\begin{equation} \label{eq:upper-bound}
\overline{\dim}_B F\leq -\frac{\lambda\log M+\log\left(\sum_{i=1}^N p_i^\lambda\right)}{\log r}, \qquad \mbox{$\mu_\pp$-almost surely}.
\end{equation}
\end{enumerate}
\end{theorem}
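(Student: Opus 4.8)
My plan for part (a) is the potential-theoretic (energy) method applied to the only natural measure on $F$, namely $m_\omega$, defined as the image under the random coding map $\pi_\omega\colon\CI\to C$, $\pi_\omega(\mathbf i):=\lim_{n\to\infty}f_{X_{\mathbf i|_1}(\omega)}\circ\cdots\circ f_{X_{\mathbf i|_n}(\omega)}([0,1])$, of the uniform Bernoulli measure $\CL=(1/M,\dots,1/M)^\NN$ on $\CI$. This is a random probability measure carried by $F(\omega)$ (the measure flagged in Section \ref{sec:lower-bound}). For fixed $s\ge0$ I will estimate the expected $s$-energy
\[
\EE_{\mu_\pp}\big[I_s(m_\omega)\big]=\int_\CI\int_\CI\EE_{\mu_\pp}\Big[|\pi_\omega(\mathbf i)-\pi_\omega(\mathbf j)|^{-s}\Big]\,d\CL(\mathbf i)\,d\CL(\mathbf j)
\]
and show it is finite for every $s$ strictly below the claimed bound. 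Fubini then gives $I_s(m_\omega)<\infty$ $\mu_\pp$-almost surely, whence $\dim_H F(\omega)\ge s$ by the energy criterion; running $s$ up to the bound through a countable sequence yields (a).

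The estimate of the integrand is driven by the branch structure of the tree. By the open set condition, if the codings $\sigma(\mathbf i)=(X_{\mathbf i|_k}(\omega))_{k}$ and $\sigma(\mathbf j)$ agree in exactly their first $W=W(\mathbf i,\mathbf j)$ coordinates, then $|\pi_\omega(\mathbf i)-\pi_\omega(\mathbf j)|\ge c\,r^{W}$ for a constant $c>0$ coming from the gaps between the $f_i([0,1])$, so that $|\pi_\omega(\mathbf i)-\pi_\omega(\mathbf j)|^{-s}\le c^{-s}r^{-sW}$. I then condition on the level $k$ of the branch point of $\mathbf i$ and $\mathbf j$ in the $M$-ary tree, an event of $(\CL\times\CL)$-probability $(1-1/M)M^{-k}$. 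The first $k$ coding symbols coincide deterministically, while past the branch point the two label streams are independent; hence the number $L$ of further agreements is geometric with $\PP(L\ge\ell)=q^{\ell}$, where $q:=p_1^2+\cdots+p_N^2$. Consequently, on this event, $\EE_{\mu_\pp}[r^{-sW}]=r^{-sk}\sum_{\ell\ge0}(1-q)(qr^{-s})^{\ell}$, finite exactly when $qr^{-s}<1$, i.e. $s<\log q/\log r$; summing the resulting factors $M^{-k}r^{-sk}$ over $k$ imposes the second condition $M^{-1}r^{-s}<1$, i.e. $s<-\log M/\log r$. Finiteness under both constraints is precisely $s<\min\{-\log M/\log r,\ \log(p_1^2+\cdots+p_N^2)/\log r\}$. (When $r=1/N$ adjacent cylinders abut and the bound $|\cdot|\ge c\,r^{W}$ can fail for pairs that separate into neighbouring cylinders; I would absorb this borderline contribution separately using the H\"older regularity of $m_\omega$.)

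For part (b) I switch to box counting. Let $\CN_n$ be the number of level-$n$ cylinders of $C$ that meet $F(\omega)$. These have diameter $r^{n}\diam(C)$ and cover $F(\omega)$, so $\overline{\dim}_B F\le\limsup_{n}\frac{\log\CN_n}{-n\log r}$ and it suffices to control $\CN_n$. Let $Z_n(u)$ be the number of length-$n$ tree-paths whose coding equals $u\in\{1,\dots,N\}^n$; linearity of expectation (valid despite the dependence among overlapping paths) gives $\EE_{\mu_\pp}[Z_n(u)]=M^{n}\prod_{i}p_i^{n_i(u)}$, where $n_i(u)$ is the number of $i$'s in $u$. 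Markov's inequality yields
\[
\EE_{\mu_\pp}[\CN_n]=\sum_{u}\PP\big(Z_n(u)\ge1\big)\le\sum_{u}\min\Big\{1,\ M^{n}\prod_{i}p_i^{n_i(u)}\Big\}.
\]
Grouping words by their type $\mathbf t=(n_1/n,\dots,n_N/n)$ and using $\binom{n}{n_1,\dots,n_N}\le e^{nH(\mathbf t)}$ with $H(\mathbf t)=-\sum_i t_i\log t_i$, together with $\min\{1,M^n\prod_i p_i^{n_i}\}=e^{n\min\{0,\phi(\mathbf t)\}}$ for $\phi(\mathbf t):=\log M+\sum_i t_i\log p_i$, I obtain, up to a polynomial factor, $\tfrac1n\log\EE_{\mu_\pp}[\CN_n]\le\max_{\mathbf t}\big[H(\mathbf t)+\min\{0,\phi(\mathbf t)\}\big]+o(1)$.

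The heart of the matter is this constrained maximization, and it is where I expect the real work. The objective $\Psi:=H+\min\{0,\phi\}$ is concave; over $\{\phi\ge0\}$ it equals $H$, maximized at the uniform vector where $\phi=\log\!\big(M(\prod_i p_i)^{1/N}\big)$, and over $\{\phi\le0\}$ it equals $H+\phi$, maximized at $\pp$ where $\phi(\pp)=\log M+\sum_i p_i\log p_i$. The two halves of the sandwich hypothesis \eqref{eq:M-sandwich} say exactly that $\phi\le0$ at the first point and $\phi\ge0$ at the second, so neither interior maximizer is feasible and the maximum of $\Psi$ is forced onto the hyperplane $\{\phi=0\}$, where $\Psi=H$. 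Maximizing $H$ under the linear constraint $\phi=0$ by Lagrange multipliers produces the tilted vector $t_i\propto p_i^{\lambda}$, and feeding it back into $\phi=0$ collapses the constraint to exactly \eqref{eq:mu-equation}; strict monotonicity of $\lambda\mapsto\sum_i t_i(\lambda)\log p_i$ (its derivative is a variance) together with the endpoint signs guaranteed by \eqref{eq:M-sandwich} gives a unique root $\lambda\in[0,1]$, and evaluating $H$ there gives $\max\Psi=\lambda\log M+\log(\sum_i p_i^{\lambda})$. A Markov plus Borel--Cantelli argument finally upgrades $\EE_{\mu_\pp}[\CN_n]\le e^{n(\max\Psi+o(1))}$ to $\limsup_n\frac1n\log\CN_n\le\max\Psi$ almost surely, which is \eqref{eq:upper-bound}. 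The main obstacle is thus the optimization: verifying that the sandwich condition is precisely what pins the maximizer to $\{\phi=0\}$ and keeps $\lambda$ within $[0,1]$; by contrast the energy computation in (a) is routine apart from the geometric care required at $r=1/N$.
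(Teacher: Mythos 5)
Your part (b) is essentially the paper's own argument: the union bound $a_{\mathbf i}\le\min\{1,M^n\prod_i p_i^{c_i(\mathbf i)}\}$, the grouping of words by type, the observation that the two inequalities in \eqref{eq:M-sandwich} force the maximizer of $H+\min\{0,\phi\}$ onto the hyperplane $\{\phi=0\}$, the Lagrange computation giving $\alpha_i\propto p_i^\lambda$, and Markov plus Borel--Cantelli. That half is correct as proposed (your monotonicity argument for the uniqueness of $\lambda$ is a legitimate alternative to the paper's Descartes-rule argument).

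Part (a) uses the same measure $m_\omega$ and the same energy criterion as the paper, and your symbolic reformulation --- condition on the branch level $k$, note that the post-branch agreement length of the two codings is geometric with parameter $q=p_1^2+\cdots+p_N^2$ --- is a clean way to see where the two terms of the minimum come from. But it has a genuine gap: the separation estimate $|\pi_\omega(\mathbf i)-\pi_\omega(\mathbf j)|\ge c\,r^{W}$, with $W$ the coding agreement length, fails whenever two adjacent images $f_i([0,1])$ and $f_{i+1}([0,1])$ share an endpoint. This is not confined to $r=1/N$: nothing in the hypotheses rules out touching pieces for any $r\le 1/N$ (e.g.\ $f_1(x)=x/3$, $f_2(x)=x/3+1/3$ with $N=2$, $r=1/3$), and the case $r=1/N$ (random subsets of the interval) is one the theorem is explicitly meant to cover. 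When pieces touch, two points whose codings diverge at position $W+1$ can be at distance as small as $r^{W+k}$ for arbitrarily large $k$, namely when the codings continue as $(\ldots,i,N,N,\ldots,N,\ldots)$ and $(\ldots,i+1,1,1,\ldots,1,\ldots)$; your bound $r^{-sW}$ on the integrand is then far too small. Your proposed fix --- absorbing this contribution via the H\"older regularity of $m_\omega$ --- is circular, since the regularity of $m_\omega$ is precisely what the energy estimate is meant to establish. The paper's proof spends most of Section \ref{sec:lower-bound} on exactly this point: it stratifies close pairs by the level $k$ at which they are first separated by a full basic interval, observes that such pairs force the rigid digit patterns above, and thereby gains the factor $(p_1p_N)^{k-2}$ in expectation, which makes $\sum_k r^{-kt}(p_1p_N)^{k-2}$ converge under the same hypothesis $t<\log q/\log r$ (because $p_1p_N\le q/2$). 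Without this additional computation your energy bound does not close in the generality claimed by the theorem.
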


\begin{remark} \label{rem:upper-bound}
{\rm
(a) The upper bound in \eqref{eq:upper-bound} really is better than the trivial bound \eqref{eq:trivial-upper-bound}. To see this, set 
\[
\phi(x):=x\log M+\log\left(\sum_{i=1}^N p_i^x\right), \qquad x\in\RR,
\]
and observe that $\phi(0)=\log N$, $\phi(1)=\log M$;
\[
\phi'(x)=\log M+\frac{\sum p_i^x\log p_i}{\sum p_i^x}
\]
has exactly one zero namely at $x=\lambda$; $\phi'(0)=\log M+\frac{1}{N}\sum\log p_i\leq 0$; and $\phi'(1)=\log M+\sum p_i\log p_i\geq 0$; where the last two inequalities follow from \eqref{eq:M-sandwich}. Thus, $\phi(\lambda)\leq \min\{\log M,\log N\}$.

(b) Note that the inequalities in \eqref{eq:M-sandwich} are always satisfied when $M=N$, since
$$\sum_{i=1}^N p_i\log p_i\geq -\log N=\log\left(\frac{1}{N}\sum_{i=1}^N p_i\right)\geq \frac{1}{N}\sum_{i=1}^N\log p_i,$$
by concavity of the logarithm. But the range of values of $M$ satisfying \eqref{eq:M-sandwich} can be much larger. For instance, take $N=3$ and $\pp=(0.05,0.2,0.75)$. Then a direct computation yields that $\prod_{i=1}^N p_i^{-p_i}\approx 1.9886$ and $(\prod_{i=1}^N p_i)^{-1/N}\approx 5.1087$, so $M$ could be $2,3,4$ or $5$. 
}
\end{remark}

Unfortunately, if $M$ is outside the range \eqref{eq:M-sandwich}, our method yields nothing better than the trivial upper bound \eqref{eq:trivial-upper-bound}. However, sometimes that bound turns out to be the exact dimension:

\begin{corollary}[Large $N$, small $M$] \label{cor:small-M}
If
$$M\leq \min\left\{\prod_{i=1}^N p_i^{-p_i},\frac{1}{p_1^2+\dots+p_N^2}\right\},$$
then, $\mu_\pp$-almost surely,
$$\dim_H F=\dim_B F=-\frac{\log M}{\log r}.$$
\end{corollary}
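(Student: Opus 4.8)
The plan is to sandwich the common dimension $a$ (whose existence is guaranteed by the Proposition preceding Theorem~\ref{thm:dimension-bounds}) between the lower bound of Theorem~\ref{thm:dimension-bounds}(a) and the trivial upper bound \eqref{eq:trivial-upper-bound}, and then to check that under the stated hypothesis both bounds collapse to the single value $-\log M/\log r$. Since $0<r\le 1/N<1$ we have $\log r<0$ throughout, so dividing an inequality between logarithms by $\log r$ reverses its direction; this sign bookkeeping, together with deciding which of the two hypotheses controls which bound, is the only thing one must watch.

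First I would dispose of the upper bound. The hypothesis gives $M\le\prod_{i=1}^N p_i^{-p_i}=\exp\!\big(-\sum_{i=1}^N p_i\log p_i\big)$, and since the entropy $-\sum p_i\log p_i$ is at most $\log N$ (concavity of the logarithm, equality at the uniform vector), we get $M\le N$. Hence $\log M\le\log N$, and dividing by $\log r<0$ yields $-\log M/\log r\le -\log N/\log r$, so the minimum in \eqref{eq:trivial-upper-bound} is attained at the $M$ term. Thus $\overline{\dim}_B F\le -\log M/\log r$ almost surely. (Alternatively, $M\le 1/\sum p_i^2\le N$, the second inequality from $\sum p_i^2\ge 1/N$ by Cauchy--Schwarz, already forces $M\le N$.)

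Next I would treat the lower bound. Theorem~\ref{thm:dimension-bounds}(a) asserts
\[
\dim_H F\ \ge\ \min\left\{-\frac{\log M}{\log r},\ \frac{\log(p_1^2+\dots+p_N^2)}{\log r}\right\}\qquad\text{a.s.}
\]
The remaining hypothesis $M\le 1/(p_1^2+\dots+p_N^2)$ is equivalent to $-\log M\ge\log\big(\sum p_i^2\big)$; dividing by $\log r<0$ reverses this to $-\log M/\log r\le\log\big(\sum p_i^2\big)/\log r$. Therefore the minimum above equals its first entry, giving $\dim_H F\ge -\log M/\log r$ almost surely.

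Combining the two displays with the universal inequality $\dim_H F\le\overline{\dim}_B F$ gives
\[
-\frac{\log M}{\log r}\ \le\ \dim_H F\ \le\ \overline{\dim}_B F\ \le\ -\frac{\log M}{\log r}\qquad\text{a.s.},
\]
so every dimension equals $-\log M/\log r$, which the Proposition identifies with the common value $a$. I do not expect a genuine obstacle here: the content is entirely in translating the two hypotheses, via the elementary facts $M\le N$ and $M\le 1/\sum p_i^2$, into the two bound collapses, the one delicate point being the orientation reversal caused by $\log r<0$.
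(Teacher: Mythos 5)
Your argument is correct and is exactly the (implicit) proof the paper intends: the hypothesis $M\le 1/\sum p_i^2$ makes the lower bound of Theorem \ref{thm:dimension-bounds}(a) equal to $-\log M/\log r$, while $M\le N$ (which, as you note, follows from either hypothesis) makes the trivial bound \eqref{eq:trivial-upper-bound} equal to the same value, and the sandwich closes via $\dim_H\le\overline{\dim}_B$. No gaps; the sign bookkeeping with $\log r<0$ is handled correctly.
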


For example, the conclusion of Corollary \ref{cor:small-M} holds when $N=3$, $M=2$ and $\pp=(0.2,0.2,0.6)$.

We observe that the inequality $M\leq (p_1^2+\dots+p_N^2)^{-1}$ can be satisfied only if either $M<N$, or $M=N$ and $p_i=1/N$ for each $i$, since it is easy to see that $\sum_{i=1}^N p_i^2\geq 1/N$, with equality if and only if $p_i=1/N$ for all $i$.

When $N=2$, the equation \eqref{eq:mu-equation} can be solved explicitly, and we get

\begin{corollary} \label{cor:two-maps}
Take $N=2$, and put $p:=p_1$. If $p(1-p)\leq M^{-2}$, then, $\mu_\pp$-almost surely,
$$\overline{\dim}_B F\leq \frac{\xi\log\xi+(1-\xi)\log(1-\xi)}{\log r},$$
where
$$\xi:=\xi(p):=\frac{\log Mp}{\log p-\log(1-p)}.$$
(When $M=2$ and $p=1/2$, we set $\xi(p):=1/2$.)
\end{corollary}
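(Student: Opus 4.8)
The plan is to specialize Theorem~\ref{thm:dimension-bounds}(b) to $N=2$ and to show that both the transcendental equation \eqref{eq:mu-equation} and the bound \eqref{eq:upper-bound} can be put into closed form through a single change of variables. The whole corollary is then a matter of algebraic simplification, with no new analytic input beyond part (b).

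First I would check that for $N=2$ the stated hypothesis $p(1-p)\le M^{-2}$ already implies the full sandwich condition \eqref{eq:M-sandwich}, so that part (b) applies. The right-hand inequality of \eqref{eq:M-sandwich} is precisely $M\le(p(1-p))^{-1/2}$, i.e. $p(1-p)\le M^{-2}$. The left-hand inequality $\prod_{i=1}^2 p_i^{-p_i}\le M$ is automatic: writing the left side as $\exp\bigl(-p\log p-(1-p)\log(1-p)\bigr)$, the exponent is the binary entropy, which never exceeds $\log 2$, so $\prod_{i=1}^2 p_i^{-p_i}\le 2\le M$. Hence Theorem~\ref{thm:dimension-bounds}(b) supplies a unique $\lambda\in[0,1]$ with $p^\lambda\log(Mp)+(1-p)^\lambda\log\bigl(M(1-p)\bigr)=0$, together with the bound $\overline{\dim}_B F\le -\bigl(\lambda\log M+\log(p^\lambda+(1-p)^\lambda)\bigr)/\log r$.

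The key step is the substitution. Put $S:=p^\lambda+(1-p)^\lambda$ and $q:=p^\lambda/S$, so that $1-q=(1-p)^\lambda/S$. Dividing \eqref{eq:mu-equation} by $S$ and expanding $\log(Mp)=\log M+\log p$ and $\log(M(1-p))=\log M+\log(1-p)$ collapses the equation to the \emph{linear} relation $\log M+q\log p+(1-q)\log(1-p)=0$. Solving this for $q$ (here $p\neq 1/2$ guarantees $\log p-\log(1-p)\neq 0$) and simplifying gives $1-q=\xi$, equivalently $q=1-\xi$, with $\xi$ as in the statement. To rewrite the bound I would take logarithms of $q=p^\lambda/S$ and $1-q=(1-p)^\lambda/S$ to get $\log S=\lambda\log p-\log q=\lambda\log(1-p)-\log(1-q)$, form the convex combination $q\cdot(\text{first})+(1-q)\cdot(\text{second})$, and invoke the linear relation above to obtain $\log S=-\lambda\log M-\bigl(q\log q+(1-q)\log(1-q)\bigr)$. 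Therefore $-\lambda\log M-\log S=q\log q+(1-q)\log(1-q)=\xi\log\xi+(1-\xi)\log(1-\xi)$, and dividing by $\log r$ yields exactly the claimed upper bound.

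Finally I would dispose of the degenerate case $M=2$, $p=1/2$, where $Mp=M(1-p)=1$ makes \eqref{eq:mu-equation} hold for every $\lambda$ and leaves $\xi$ indeterminate. Here a direct computation shows \eqref{eq:upper-bound} is independent of $\lambda$: indeed $\lambda\log 2+\log\bigl(2\cdot 2^{-\lambda}\bigr)=\log 2$, so the bound equals $-\log 2/\log r$, which is precisely the value produced by the convention $\xi=1/2$ (and is the continuous limit as $p\to 1/2$). No serious obstacle arises; the only points needing care are recognizing the variable $q=p^\lambda/S$, which simultaneously linearizes \eqref{eq:mu-equation} and turns the entropy-type expression in \eqref{eq:upper-bound} into $\xi\log\xi+(1-\xi)\log(1-\xi)$, and tracking signs so that the division by the negative quantity $\log r$ comes out correctly.
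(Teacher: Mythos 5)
Your proposal is correct and follows exactly the route the paper intends (the paper states only that ``the equation \eqref{eq:mu-equation} can be solved explicitly'' for $N=2$ and omits the computation): you verify that $p(1-p)\le M^{-2}$ gives the full condition \eqref{eq:M-sandwich} since $p^{-p}(1-p)^{-(1-p)}\le 2\le M$, and the substitution $q=p^\lambda/\bigl(p^\lambda+(1-p)^\lambda\bigr)$ correctly linearizes \eqref{eq:mu-equation} to give $1-q=\xi$ and turns \eqref{eq:upper-bound} into the stated entropy expression. The treatment of the degenerate case $M=2$, $p=1/2$ is also right.
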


Observe that the condition $p(1-p)\leq M^{-2}$ is always satisfied when $M=2$. We illustrate the upper and lower bounds for the case $N=M=2$ in Figure \ref{fig:bounds}. The bounds are plotted as a function of $p:=p_1$. Observe that the upper bound tends to $0$ as $p$ tends to $0$ or $1$; however, it is still quite large even for values of $p$ very close to $0$ and $1$. This stems from the fact that the upper bound is the composition of two functions which both have vertical tangent lines at $0$ and $1$.

\begin{figure}
\begin{center}
\epsfig{file=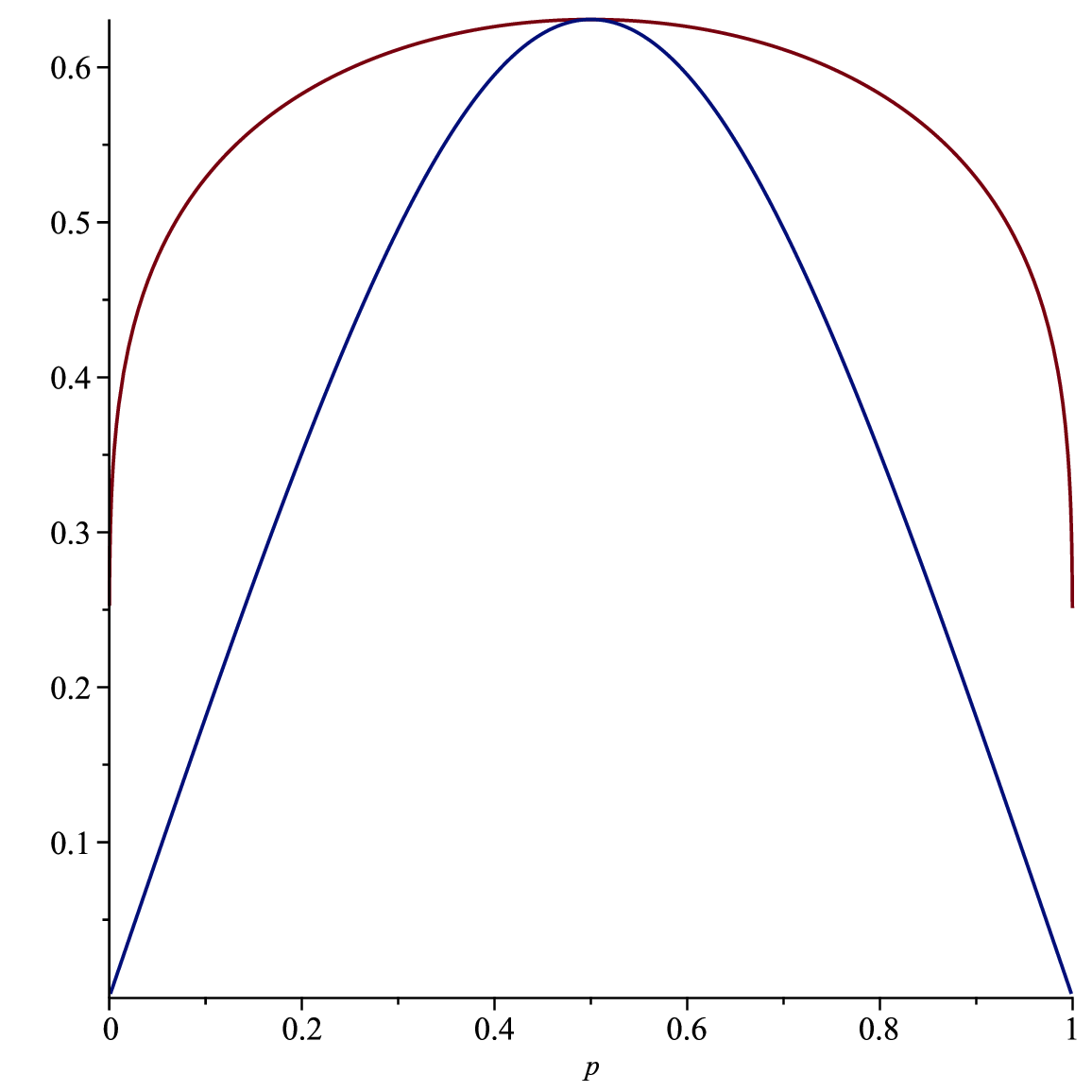, height=.25\textheight, width=.5\textwidth}
\end{center}
\caption{The upper and lower bounds of Corollary \ref{cor:two-maps}, with $N=M=2$.}
\label{fig:bounds}
\end{figure}

\begin{corollary} \label{cor:symmetric-case}
When $M\geq N$ and $p_i=1/N$ for each $i$, we have 
$$\dim_H F=\dim_B F=\dim_H C=-\frac{\log N}{\log r}\qquad\mbox{$\mu_\pp$-almost surely}.$$
\end{corollary}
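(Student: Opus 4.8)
The plan is to obtain matching upper and lower bounds and then squeeze. Since $F(\omega)\subseteq C$ for every $\omega\in\Omega$, monotonicity of Hausdorff and box dimensions immediately yields $\dim_H F\leq \dim_H C=-\log N/\log r$ and $\overline{\dim}_B F\leq \dim_B C=-\log N/\log r$ for all $\omega$. This handles the upper bound for the full range $M\geq N$. I would remark in passing that Theorem \ref{thm:dimension-bounds}(b) is of no use here once $M>N$: when $p_i=1/N$ both sides of \eqref{eq:M-sandwich} equal $N$, so the sandwich condition collapses to $M=N$. The containment bound, by contrast, costs nothing and covers all $M\geq N$.

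For the lower bound I would specialize Theorem \ref{thm:dimension-bounds}(a). Substituting $p_i=1/N$ gives $p_1^2+\dots+p_N^2=N\cdot N^{-2}=1/N$, so the second term in the minimum becomes $\log(1/N)/\log r=-\log N/\log r$. It then remains only to check that the first term does not lower the minimum: since $0<r<1$ we have $\log r<0$, and $M\geq N$ gives $\log M\geq \log N$, so dividing by the negative quantity $\log r$ reverses the inequality and $-\log M/\log r\geq -\log N/\log r$. Hence the minimum equals $-\log N/\log r$, and Theorem \ref{thm:dimension-bounds}(a) gives $\dim_H F\geq -\log N/\log r$ almost surely.

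Combining, $\dim_H F=-\log N/\log r$ $\mu_\pp$-almost surely. For the box dimension I would invoke the chain $\dim_H F\leq \underline{\dim}_B F\leq \overline{\dim}_B F\leq -\log N/\log r$, whose outer terms now agree, forcing equality throughout; alternatively one may simply appeal to the Proposition, which already asserts that the three dimensions share a common almost-sure value $a$, so that $a$ is pinned down to $-\log N/\log r$.

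I do not anticipate any genuine obstacle here: this corollary is precisely the fortunate case in which the general lower bound of Theorem \ref{thm:dimension-bounds}(a) meets the trivial containment upper bound. The only things to verify are the one-line identity $\sum (1/N)^2 = 1/N$ and the correct direction of the inequality when comparing $-\log M/\log r$ with $-\log N/\log r$, which hinges on $\log r<0$. The conceptual content is simply that at the uniform vector the lower-bound exponent $\log(\sum p_i^2)/\log r$ degenerates exactly to $\dim_H C$.
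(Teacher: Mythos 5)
Your proof is correct and is exactly the intended derivation: the paper states this corollary without an explicit proof precisely because it follows by combining the lower bound of Theorem \ref{thm:dimension-bounds}(a), which at the uniform vector degenerates to $\log(1/N)/\log r=-\log N/\log r$ with the first term of the minimum dominated since $M\geq N$ and $\log r<0$, with the trivial containment bound \eqref{eq:trivial-upper-bound}. Your side remark that Theorem \ref{thm:dimension-bounds}(b) collapses to the case $M=N$ when $p_i=1/N$ is also accurate.
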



\begin{remark}
{\rm
When $N=M$, the random subset $F$ may also be viewed as an orthogonal projection of a statistically self-similar set in the plane satisfying the open set condition. We thank an anonymous referee for pointing this out. 
Define the maps
\[
g_{i,j}(x,y):=(f_i(x),f_j(y)), \qquad i,j\in\{1,2,\dots,M\}.
\]
These maps take the unit square onto $M^2$ non-overlapping subsquares of width $c$, arranged in $M$ rows and $M$ columns.
Now consider the setup from Subsection \ref{subsec:self-similar}, but with maps on $\RR^2$ instead of $\RR$. We take the probability measure $\nu$ to be
\[
\nu=\sum_{i_1,\dots,i_M}p_{i_1}\dots p_{i_M}\delta_{(g_{i_1,1},g_{i_2,2},\dots,g_{i_M,M})}.
\]
In essence, the measure $\nu$ picks out precisely one square in each row. Let $K(\tilde{\omega})$ be the resulting statistically self-similar set. It follows from the results in \cite{Falconer, Graf, Mauldin and Williams} that $K$ has almost sure Hausdorff and box-counting dimension $-\log M/\log r$. The projection of $K$ onto the $y$-axis is the full Cantor set $C$, while its projection onto the $x$-axis has the same probability distribution as our random subset $F$. Note that Marstrand's projection theorem implies that the projection of $K$ in the ``typical" direction (in the sense of Lebesgue measure) has Hausdorff dimension $-\log M/\log r=\dim_H C$. As a result, the horizontal direction in this context is ``typical", whereas the vertical direction is not, except when $p_i=1/M$ for each $i$.

}
\end{remark}

\section{Proof of the lower bound} \label{sec:lower-bound}

Recall that every basic interval at level $n$ in the construction of the deterministic Cantor set $C$ is given by $I_{\mathbf{i}}:=f_{i_1}\circ\dots \circ f_{i_n}([0,1])$ for some $\mathbf{i}\in\Omega_n$. 
We first define a random probability measure on $F(\omega)$: For any $\mathbf{i}=(i_1,\dots,i_n)\in\Omega_n$ and $\mathbf{j}=(j_1,\dots,j_n)\in\CI_n$, define the event
\begin{equation} \label{eq:Aij}
A(\mathbf{i};\mathbf{j}):=\big\lbrace \omega\in\Omega: (X_{j_1}(\omega),X_{j_1,j_2}(\omega),\dots,X_{j_1,j_2,\dots,j_n}(\omega))=\mathbf{i}\big\rbrace.
\end{equation}
Then for $\mathbf{i}\in\Omega_n$, define
\begin{equation}
\label{random measure}
    m_\omega(I_{\mathbf{i}}):=\frac{1}{M^n}\sum_{\mathbf{j}\in\CI_n}1_{A(\mathbf{i};\mathbf{j})}(\omega).
\end{equation}

Intuitively, $m_\omega(I_{\mathbf i})$ is the relative number of branches labeled $\mathbf{i}$ in the tree at time $n$ by the realization $\omega$. 
It follows again from standard theorems that $m_\omega$ uniquely defines a measure on $F(\omega)$. 
We use this random measure along with the potential-theoretic method \cite[Theorem 4.13]{Falconer2} to prove the lower bound in Theorem \ref{thm:dimension-bounds}.

\begin{proof}[Proof of Theorem \ref{thm:dimension-bounds}, lower bound]
Fix 
\begin{equation} \label{eq:t-small-enough}
t<s(\pp):=\min\left\{-\frac{\log M}{\log r},\frac{\log(p_1^2+\dots+p_N^2)}{\log r}\right\}.
\end{equation}
We will derive an estimate for the $t$-energy of $m_\omega$, that is,
\[
\Phi_t(m_\omega):=\iint\limits_{[0,1]\times[0,1]}|x-y|^{-t}dm_\omega(x)dm_\omega(y).
\]

If $x$ and $y$ are two different points in $F(\omega)$, then there is a smallest basic interval (say $x\wedge y$) to which they both belong.
If $x\wedge y=I_{\mathbf{i}}$ with $|\mathbf{i}|=n$, we define $k(x,y)$ to be the smallest integer $k$ such that $x$ and $y$ are separated by at least one full basic interval of level $n+k$. Note that if $k(x,y)=k$, then 
\[
|x-y|\geq r^{n+k},
\]
since each basic interval of level $n+k$ has length $r^{n+k}$. Without loss of generality we may assume that $x<y$. If $k=1$, then there are indices $i_{n+1}$ and $i_{n+1}'\in\{1,\dots,N\}$ such that $i_{n+1}'-i_{n+1}\geq 2$ and $x\in I_{\mathbf{i},i_{n+1}}$, $y\in I_{\mathbf{i},i_{n+1}'}$. If $k\geq 2$, there are two possibilities:
\begin{enumerate}[(i)]
\item $x\in I_{\mathbf{i},i_{n+1},N^{k-2},i_{n+k}}$ and $y\in I_{\mathbf{i},i_{n+1}+1,1^{k-2}}$, where $i_{n+1},i_{n+k}\in\{1,\dots,N-1\}$;
\item $x\in I_{\mathbf{i},i_{n+1},N^{k-2}}$ and $y\in I_{\mathbf{i},i_{n+1}+1,1^{k-2},i_{n+k}'}$, where $i_{n+1}\in\{1,\dots,N-1\}$ and $i_{n+k}'\in\{2,\dots,N\}$.
\end{enumerate}
Note that these two cases have some overlap; ignoring these overlaps leads at most to an overestimate of the energy.

We begin estimating the contribution of $\mathbf{i}$ to the $t$-energy of $m_\omega$, $\Phi_t^{\mathbf{i}}(m_\omega)$, by
\begin{align}
\Phi_t^{\mathbf{i}}(m_\omega) & :=\iint\limits_{\{(x,y):x\wedge y=I_{\mathbf{i}}\}}|x-y|^{-t}dm_\omega(x)dm_\omega(y) \label{eq:partial-energy} \\
&=\sum_{k=1}^\infty\ \ \iint\limits_{\substack{x\wedge y=I_{\mathbf{i}}, \\ k(x,y)=k}} |x-y|^{-t}dm_\omega(x)dm_\omega(y) \notag \\
&\leq 2\sum_{k=2}^\infty r^{-(n+k)t}\bigg(\sum_{i_{n+1}=1}^{N-1}\sum_{i_{n+k}=1}^{N-1}m_\omega\big(I_{\mathbf{i},i_{n+1},N^{k-2},i_{n+k}}\big)m_\omega\big(I_{\mathbf{i},i_{n+1}+1,1^{k-2}}\big) \label{eq:double-sum1} \\
&\qquad\qquad\qquad\qquad  + \sum_{i_{n+1}=1}^{N-1}\sum_{i_{n+k}'=2}^{N}m_\omega\big(I_{\mathbf{i},i_{n+1},N^{k-2}}\big)m_\omega\big(I_{\mathbf{i},i_{n+1}+1,1^{k-2},i_{n+k}'}\big)\bigg) \label{eq:double-sum2} \\
&\qquad\qquad + 2r^{-(n+1)t}\sum_{i_{n+1}=1}^{N-2}\sum_{i_{n+1}'=i_{n+1}+2}^N m_\omega\big(I_{\mathbf{i},i_{n+1}}\big)m_\omega\big(I_{\mathbf{i},i_{n+1}'}\big) \label{eq:sum-of-two-products}
\end{align}
We first estimate the expectation of the summands in \eqref{eq:double-sum1}.
By the definition of $m_\omega$, we have
\begin{gather*}
m_\omega\big(I_{\mathbf{i},i_{n+1},N^{k-2},i_{n+k}}\big)=\frac{1}{M^{n+k}}\sum_{\mathbf{j}\in \CI_{n+k}} 1_{A(\mathbf{i},i_{n+1},N^{k-2},i_{n+k};\mathbf{j})}(\omega),\\
m_\omega\big(I_{\mathbf{i},i_{n+1}+1,1^{k-2}}\big)=\frac{1}{M^{n+k}}\sum_{\mathbf{j'}\in \CI_{n+k-1}} 1_{A(\mathbf{i},i_{n+1}+1,1^{k-2};\mathbf{j'})}(\omega).
\end{gather*}
Thus, letting $E_\pp$ denote the expectation operator associated with $\mu_\pp$,
\begin{align*}
E_\pp&\left[m_\omega\big(I_{\mathbf{i},i_{n+1},N^{k-2},i_{n+k}}\big)m_\omega\big(I_{\mathbf{i},i_{n+1}+1,1^{k-2}}\big)\right]\\
&=\frac{1}{M^{2n+2k-1}} \sum_{\mathbf{j}\in \CI_{n+k}} \sum_{\mathbf{j'}\in \CI_{n+k-1}} \mu_\pp\left(A(\mathbf{i},i_{n+1},N^{k-2},i_{n+k};\mathbf{j})\cap A(\mathbf{i},i_{n+1}+1,1^{k-2};\mathbf{j'})\right)\\
&\leq \frac{1}{M^{2n+2k-1}} M^k p_{i_{n+1}}p_N^{k-2}p_{i_{n+k}}M^{k-1}p_{i_{n+1}+1}p_1^{k-2} \sum_{\mathbf{j},\mathbf{j}'\in\CI_n} \mu_\pp\big(A(\mathbf{i};\mathbf{j})\cap A(\mathbf{i};\mathbf{j}')\big)\\
&\leq M^{-2n}(p_1 p_N)^{k-2} S_\pp(\mathbf{i}),
\end{align*}
where
\[
S_\pp(\mathbf{i}):=\sum_{\mathbf{j},\mathbf{j}'\in\CI_n} \mu_\pp\big(A(\mathbf{i};\mathbf{j})\cap A(\mathbf{i};\mathbf{j}')\big).
\]
Similarly, for the summands in \eqref{eq:double-sum2},
\[
E_\pp\left[m_\omega\big(I_{\mathbf{i},i_{n+1},N^{k-2}}\big)m_\omega\big(I_{\mathbf{i},i_{n+1}+1,1^{k-2},i_{n+k}'}\big)\right] \leq M^{-2n}(p_1 p_N)^{k-2} S_\pp(\mathbf{i}),
\]
and for the summands in \eqref{eq:sum-of-two-products},
\[
E_\pp\left[m_\omega\big(I_{\mathbf{i},i_{n+1}}\big)m_\omega\big(I_{\mathbf{i},i_{n+1}'}\big)\right]\leq M^{-2n} S_\pp(\mathbf{i}).
\]
Combining these estimates and noting that the double sums in \eqref{eq:double-sum1}-\eqref{eq:sum-of-two-products} have at most $N^2$ terms, we obtain
\begin{align}
\begin{split}
E_\pp\left[\Phi_t^{\mathbf{i}}(m_\omega)\right] &\leq 4N^2\sum_{k=2}^\infty r^{-(n+k)t}\cdot M^{-2n}(p_1 p_N)^{k-2} S_\pp(\mathbf{i})+2N^2 r^{-(n+1)t}M^{-2n}S_\pp(\mathbf{i}) \\
&\leq 4N^2 M^{-2n}S_\pp(\mathbf{i})\left(r^{-(n+2)t}\sum_{k=2}^\infty (r^{-t}p_1p_N)^{k-2}+r^{-(n+1)t}\right)\\
&=K(r^t M^2)^{-n}S_\pp(\mathbf{i})
\end{split}
\label{eq:E_p-calculation}
\end{align}
for a certain constant $K$ that depends on $r$ and $t$, but not on $\mathbf{i}$. Note that the last summation in \eqref{eq:E_p-calculation} converges because \eqref{eq:t-small-enough} implies
\[
r^{-t}p_1p_N<\frac{p_1p_N}{p_1^2+\dots+p_N^2}\leq \frac{p_1p_N}{p_1^2+p_N^2}\leq \frac12<1.
\]

To estimate the double sum $S_\pp(\mathbf{i})$, we introduce the sets
$$W_n(q):=\{(\mathbf{j},\mathbf{k})\in\CI_n\times\CI_n:|\mathbf{j}\wedge \mathbf{k}|=q\},\qquad q=0,1,\dots,n,$$ 
where $\mathbf{j}\wedge \mathbf{k}$ is the longest common prefix of $\mathbf{j}$ and $\mathbf{k}$.
Observe that for any $k\ge1$ we have $\#W_k(0)=(M-1)M^{2k-1}$. Furthermore, we have the recursive relation $\#W_n(q)=M^q\cdot\#W_{n-q}(0)$ for $q<n$. Thus, 
$$\#W_n(q)=M^q(M-1)M^{2n-2q-1}=(M-1)M^{2n-q-1}, \qquad q=0,1,\dots,n-1.$$
For $q=n$ we have $\#W_n(n)=M^n$. In all cases, then, $\#W_n(q)\leq M^{2n-q}$.

Before continuing with our previous calculation we define
\begin{equation} \label{eq:counts}
c_l(\mathbf{i}):=\#\{k\le n:i_k=l\}, \qquad \mathbf{i}\in\CI_n, \quad n\in\NN, \quad l=1,\dots,N.
\end{equation}
We also write, for $\mathbf{i}\in\CI_n$,
$$\mathbf{i}|^q:=(i_{q+1},\dots,i_n),$$
to be the suffix of $\mathbf{i}$ after the $q$th digit (as opposed to the prefix $\mathbf{i}|_q$ up to $q$).
Notice that $c_l(\mathbf{i}|_q)+c_l(\mathbf{i}|^q)=c_l(\mathbf{i})$. Now we have
\begin{equation} \label{exp of product of measure}
S_\pp(\mathbf{i})=\sum_{q=0}^{n}\sum_{(\mathbf{j},\mathbf{k})\in W_n(q)}\mu_\pp\big(A(\mathbf{i};\mathbf{j})\cap A(\mathbf{i};\mathbf{k})\big).
\end{equation}
    
Since the labels $(X_{j_1},\dots,X_{j_1,\dots,j_n})$ and $(X_{k_1},\dots,X_{k_1,\dots,k_n})$ of each pair of branches $(\mathbf{j},\mathbf{k})$ in $W_n(q)$ agree up to index $q$, and are independent after $q$, we have
\begin{equation*} 
\mu_\pp\big(A(\mathbf{i};\mathbf{j})\cap A(\mathbf{i};\mathbf{k})\big) = \prod_{l=1}^N p_l^{c_l(\mathbf{i}|_q)}\cdot \prod_{l=1}^N p_l^{2c_l(\mathbf{i}|^q)}.
\end{equation*}
for all $(\mathbf{j},\mathbf{k})\in W_n(q)$.
Since there are at most $M^{2n-q}$ pairs of words in $W_n(q)$, (\ref{exp of product of measure}) gives
\[
S_\pp(\mathbf{i}) \leq \sum_{q=0}^{n}M^{2n-q}\prod_{l=1}^N p_l^{c_l(\mathbf{i}|_q)}\cdot \prod_{l=1}^N p_l^{2c_l(\mathbf{i}|^q)}.
\]
Writing $\mathbf{i}':=\mathbf{i}|_q$ and $\mathbf{i}'':=\mathbf{i}|^q$, and noting that summing over all $\mathbf{i}\in\Omega_n$ is the same as summing independently over $\mathbf{i}'\in\Omega_q$ and $\mathbf{i}''\in\Omega_{n-q}$, we obtain the estimate
\[
\sum_{\mathbf{i}\in\Omega_n}S_\pp(\mathbf{i}) \leq \sum_{q=0}^n M^{2n-q} \sum_{\mathbf{i}'\in\Omega_q}\prod_{l=1}^N p_l^{c_l(\mathbf{i}')}\cdot \sum_{\mathbf{i}''\in\Omega_{n-q}} \prod_{l=1}^N p_l^{2c_l(\mathbf{i}'')}.
\]
Now observe that
\[
\sum_{\mathbf{i}'\in\Omega_q}\prod_{l=1}^N p_l^{c_l(\mathbf{i}')}=\sum_{(i_1,\dots,i_q)\in\Omega_q}p_{i_1}\dots p_{i_q}=(p_1+\dots+p_N)^q=1
\]
and similarly,
\[
\sum_{\mathbf{i}''\in\Omega_{n-q}} \prod_{l=1}^N p_l^{2c_l(\mathbf{i}'')}=(p_1^2+\dots+p_N^2)^{n-q}.
\]
Hence,
\[
\sum_{\mathbf{i}\in\Omega_n}S_\pp(\mathbf{i}) \leq \sum_{q=0}^n M^{2n-q}(p_1^2+\dots+p_N^2)^{n-q}=M^n\sum_{j=0}^n\left[M(p_1^2+\dots+p_N^2)\right]^j.
\]
Combining this with \eqref{eq:partial-energy} and \eqref{eq:E_p-calculation} yields
\begin{align*}
E_\pp\Big[\Phi_t(m_\omega)\Big] &=\sum_{n=0}^\infty \sum_{\mathbf{i}\in\Omega_n} E_\pp\left[\Phi_t^{\mathbf{i}}(m_\omega)\right]
\leq \sum_{n=0}^\infty K(r^t M^2)^{-n}\sum_{\mathbf{i}\in\Omega_n}S_\pp(\mathbf{i})\\
&\leq \sum_{n=0}^\infty K(r^t M)^{-n} \sum_{j=0}^n\left[M(p_1^2+\dots+p_N^2)\right]^j.
\end{align*}

We must consider two cases:

\bigskip

{\em Case 1}: $M(p_1^2+\dots+p_N^2)\geq 1$. Then
\begin{align*}
E_\pp\Big[\Phi_t(m_\omega)\Big] &\leq \sum_{n=0}^\infty K(n+1)(r^t M)^{-n}\left[M(p_1^2+\dots+p_N^2)\right]^n\\
&=\sum_{n=0}^\infty K(n+1)\big[r^{-t}(p_1^2+\dots+p_N^2)\big]^n<\infty,
\end{align*}
because \eqref{eq:t-small-enough} implies $r^{-t}<(p_1^2+\dots+p_N^2)^{-1}$.

\bigskip

{\em Case 2}: $M(p_1^2+\dots+p_N^2)<1$. Then $\sum_{j=0}^n\left[M(p_1^2+\dots+p_N^2)\right]^j$ is bounded by some constant $\tilde{K}$, and we get simply
\[
E_\pp\Big[\Phi_t(m_\omega)\Big] \leq K\tilde{K} \sum_{n=0}^\infty (r^{-t}M^{-1})^n<\infty,
\]
since \eqref{eq:t-small-enough} implies $r^{-t}<M$.

Therefore, for any $t<s(\pp)$ we have that $E_\pp\big[\Phi_t(m_\omega)\big]<\infty$, which means that $\Phi_t(m_\omega)<\infty$ for $\mu_\pp$-almost every $\omega\in\Omega$. Hence, $\dim_H F(\omega)\ge s(\pp)$ almost surely.
\end{proof}

\section{Proof of the upper bound} \label{sec:upper-bound}

\begin{proof}[Proof of Theorem \ref{thm:dimension-bounds}, upper bound]
Assume the inequalities \eqref{eq:M-sandwich} hold. Uniqueness of the zero $\lambda$ of \eqref{eq:mu-equation} follows from a version of Descartes' rule of signs: Arrange the $p_i$'s so that $p_1\leq p_2\leq\dots \leq p_N$, and make the substitution $u:=p_1^\lambda$. Then
\[
\sum p_i^\lambda \log(Mp_i)=\sum u^{\beta_i}\log(Mp_i)=:g(u),
\]
where $\beta_i:=\log p_i/\log p_1$. Note that the exponents $\beta_i$ decrease in $i$ whereas the coefficients $\log(Mp_i)$ increase in $i$, so there is at most one sign change in the coefficients. Descartes' rule of signs does not require the exponents to be integers, so there is at most one positive solution to $g(u)=0$.
 
For a word $\mathbf{i}=(i_1,\dots,i_n)\in\Omega_n$, let $a_{\mathbf{i}}$ be the probability under $\mu_\pp$ that at least one path of length $n$ starting at the root of the $M$-ary tree has label sequence $\mathbf{i}$. That is,
\begin{equation} \label{eq:a_i-definition}
a_{\mathbf{i}}:=\mu_\pp\left(\bigcup_{\mathbf{j}\in\CI_n}A(\mathbf{i};\mathbf{j})\right).
\end{equation}
Since the probability of $A(\mathbf{i};\mathbf{j})$ is the same for each path $\mathbf{j}$, we have
\begin{equation} \label{eq:simply-a_i-bound}
a_{\mathbf{i}}\leq M^n \prod_{l=1}^N p_l^{c_l(\mathbf{i})},
\end{equation}
where $c_l(\mathbf{i})$ was defined in \eqref{eq:counts}. Trivially, we also have $a_{\mathbf{i}}\leq 1$. 
Let $Z_n(\omega)$ denote the number of basic intervals at level $n$ in the construction of $C$ that intersect the limit set $F(\omega)$. Then
\begin{equation} \label{eq:expected-number}
E_\pp(Z_n)=\sum_{\mathbf{i}\in\Omega_n} a_{\mathbf{i}}.
\end{equation}
Our goal is to bound the growth rate of $E_\pp(Z_n)$. Note that
\begin{equation} \label{eq:expected-count-bound}
E_\pp(Z_n)\leq \sum_{k_1+\dots+k_N=n}\binom{n}{k_1,\dots,k_N}\min\left\{1,M^n\prod_{i=1}^N p_i^{k_i}\right\},
\end{equation}
by grouping together all label words of length $n$ that have the same digit frequencies. This is a rough estimate because it does not take into account the overlaps among the events $A(\mathbf{i};\mathbf{j})$, $\mathbf{j}\in\CI_n$. But understanding these overlaps appears to be a daunting task that we have not pursued further. We can, however, determine the exact exponential growth rate of the right hand side of \eqref{eq:expected-count-bound}.

Since the number of summands in \eqref{eq:expected-count-bound} is no greater than $n^{N-1}$ (a polynomial in $n$), it suffices to find the largest summand and determine its growth rate in $n$. To begin, write $k_i=\alpha_i n$ for $i=1,\dots,N$. Then we need to solve the optimization problem
\begin{equation} \label{eq:alpha-optimization}
\max\left\{\binom{n}{\alpha_1 n,\dots,\alpha_N n}\min\left\{1,\left(M\prod_{i=1}^N p_i^{\alpha_i}\right)^n\right\}: \alpha_i\in[0,1]\,\forall i, \ \sum_i\alpha_i=1\right\},
\end{equation}
where, if $\alpha_i n$ is not an integer, we define $(\alpha_i n)!$ to be $\Gamma(\alpha_i n+1)$, with $\Gamma$ denoting the gamma function. Now observe that
\[
M\prod_{i=1}^N p_i^{\alpha_i}=1 \quad\Longleftrightarrow \quad \sum_{i=1}^N \alpha_i\log p_i=-\log M,
\]
and this last equation determines a hyperplane in $\RR^N$. This hyperplane intersects the simplex
\[
\Delta:=\left\{(\alpha_1,\dots,\alpha_N)\in[0,1]^N: \sum_{i=1}^N \alpha_i=1\right\},
\]
and in fact, by \eqref{eq:M-sandwich}, the point $\pp=(p_1,\dots,p_N)$ lies on the side of the hyperplane where $M\prod_{i=1}^N p_i^{\alpha_i}\geq 1$, and the point $(1/N,\dots,1/N)$ lies on the opposite side. The functions
\[
\psi_1(\alpha_1,\dots,\alpha_N):=\binom{n}{\alpha_1 n,\dots,\alpha_N n}
\]
and
\[
\psi_2(\alpha_1,\dots,\alpha_N):=\binom{n}{\alpha_1 n,\dots,\alpha_N n}\prod_{i=1}^N p_i^{\alpha_i}
\]
are both unimodal on $\Delta$; $\psi_1$ is maximized at $(\alpha_1,\dots,\alpha_N)=(1/N,\dots,1/N)$; and $\psi_2$ is maximized when $(\alpha_1 n,\dots,\alpha_N n)$ is near the mode of the multinomial$(n;p_1,\dots,p_N)$ distribution, i.e. when $(\alpha_1,\dots,\alpha_N)\approx (p_1,\dots,p_N)$. The error in this approximation goes to zero as $n\to\infty$. Since the global maximum of both $\psi_1$ and $\psi_2$ lies on the ``wrong" side of the hyperplane $\sum_{i=1}^N \alpha_i\log p_i=-\log M$, the constrained maximum in \eqref{eq:alpha-optimization} must be attained on this hyperplane. By Stirling's approximation,
\[
\binom{n}{\alpha_1 n,\dots,\alpha_N n}=Q_n(\alpha_1,\dots,\alpha_n)\big(\alpha_1^{\alpha_1}\dots\alpha_N^{\alpha_N}\big)^{-n},
\]
where $Q_n$ is a function such that both $Q_n$ and $1/Q_n$ grow at most at a polynomial rate in $n$. Hence, to determine the exponential growth rate of the maximum in \eqref{eq:alpha-optimization}, we must solve the constrained optimization
\[
\max\left\{-\sum_{i=1}^N \alpha_i\log\alpha_i: (\alpha_1,\dots,\alpha_N)\in\Delta, \sum_{i=1}^N \alpha_i\log p_i=-\log M\right\}.
\]
This is a straightforward Lagrange multiplier problem; we find that the maximum is attained when
\[
\alpha_i=\frac{p_i^\lambda}{\sum_j p_j^\lambda}, \qquad i=1,\dots,N,
\]
where $\lambda$ satisfies the equation
\[
\frac{\sum p_i^\lambda\log p_i}{\sum p_i^\lambda}=-\log M,
\]
i.e. $\sum p_i^\lambda \log(Mp_i)=0$. A further calculation then yields that
\[
-\sum \alpha_i\log\alpha_i=\lambda\log M+\log\left(\sum p_i^\lambda\right).
\]
Denote this last expression by $\rho$. Then we find that, for given $\eps>0$ and sufficiently large $n$,
$E_\pp(Z_n)\leq (\rho+\eps)^n$. So
\[
\mu_\pp\big(Z_n>(\rho+2\eps)^n\big)\leq \frac{E_\pp(Z_n)}{(\rho+2\eps)^n}\leq \frac{(\rho+\eps)^n}{(\rho+2\eps)^n}.
\]
Hence the series $\sum_{n=1}^\infty \mu_\pp\big(Z_n>(\rho+2\eps)^n\big)$ converges, so by the Borel-Cantelli lemma, $Z_n\leq(\rho+2\eps)^n$ for all sufficiently large $n$ with probability one. Therefore, 
\[
\overline{\dim}_B F(\omega)\leq -\frac{\log(\rho+2\eps)}{\log r} \qquad\mbox{for $\mu_\pp$-almost every $\omega$},
\]
and letting $\eps\to 0$ along a discrete sequence gives
\[
\overline{\dim}_B F(\omega)\leq -\frac{\log\rho}{\log r} \qquad\mbox{for $\mu_\pp$-almost every $\omega$},
\]
as was to be shown.
\end{proof}

\begin{remark} \label{rem:quadratic-recursions}
{\rm
We briefly comment here on the difficulty of improving the upper bound. Consider the simplest case, $N=M=2$, and set $p_1=p$ and $p_2=1-p$. We can then write down a pair of quadratic recursions for the probabilities $a_{\mathbf{i}}$: For $n\geq 2$,
\begin{align}
\begin{split}
a_{1,i_2,\dots,i_n}&=2p(1-p)a_{i_2,\dots,i_n}+p^2(2a_{i_2,\dots,i_n}-a_{i_2,\dots,i_n}^2)\\
&=2pa_{i_2,\dots,i_n}-p^2 a_{i_2,\dots,i_n}^2,
\end{split}
\label{eq:a0}
\end{align}
and
\begin{align}
\begin{split}
a_{2,i_2,\dots,i_n}&=2p(1-p)a_{i_2,\dots,i_n}+(1-p)^2(2a_{i_2,\dots,i_n}-a_{i_2,\dots,i_n}^2)\\
&=2(1-p)a_{i_2,\dots,i_n}-(1-p)^2 a_{i_2,\dots,i_n}^2.
\end{split}
\label{eq:a1}
\end{align}
For instance, \eqref{eq:a0} can be understood by looking at the three cases in Figure \ref{three cases general}.

\begin{figure}[h]
\centering
\begin{tikzpicture}
    \tikzstyle{level 1}=[level distance=10mm,sibling distance=2.75cm]
    \tikzstyle{solid node}=[circle,draw,inner sep=1.2,fill=black];

    \node (0) [solid node]{ }
        child{ node[solid node, label = below:{$\substack{\text{at least one branch}\\ \text{of length }n-1\\ \text{labeled $i_2,\dots,i_n$ here}}$}]{ } edge from parent node [left] {1}}
        child{ node[solid node]{ } edge from parent node [right] {2}};
\end{tikzpicture}\:\:\:\:\:
\begin{tikzpicture}
    \tikzstyle{level 1}=[level distance=10mm,sibling distance=2.75cm]
    \tikzstyle{solid node}=[circle,draw,inner sep=1.2,fill=black];

    \node (0) [solid node]{ }
        child{ node[solid node]{ } edge from parent node [left] {2}}
        child{ node[solid node, label = below:{$\substack{\text{at least one branch}\\ \text{of length }n-1\\ \text{labeled $i_2,\dots,i_n$ here}}$}]{ } edge from parent node [right] {1}};
\end{tikzpicture}\:\:
\begin{tikzpicture}
    \tikzstyle{level 1}=[level distance=10mm,sibling distance=2.75cm]
    \tikzstyle{solid node}=[circle,draw,inner sep=1.2,fill=black];

    \node (0) [solid node]{ }
        child{ node[solid node, label = below:{$\substack{\text{either at least one} \\ \text{branch labeled}\\ \text{$i_2,\dots,i_n$ here or...}}$}]{ } edge from parent node [left] {1}}
        child{ node[solid node, label = below:{$\substack{\text{...at least one}\\ \text{branch labeled}\\ \text{$i_2,\dots,i_n$ here}}$}]{ } edge from parent node [right] {1}};
\end{tikzpicture}
\caption{Three possibilities if a branch labeled $(1,i_2,\dots,i_n)$ is in the tree at time $n$.}
\label{three cases general}
\end{figure}
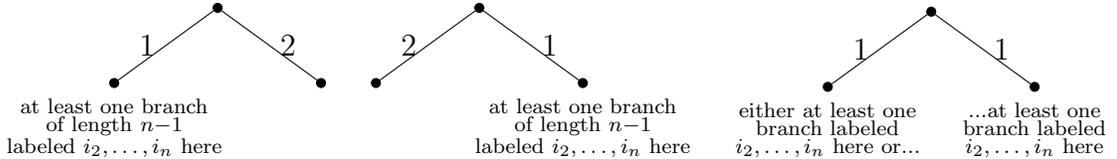

Ignoring the quadratic terms in the recursions would simply lead us back to \eqref{eq:simply-a_i-bound}. But a full understanding of their effects, averaged out over all paths $\mathbf{i}$, would entail a delicate analysis of the joint dynamics of the two quadratic maps $x\mapsto px(2-px)$ and $x\mapsto (1-p)x(2-(1-p)x)$, which appears to be extremely  complicated. (For one thing, better bounds on $a_{\mathbf{i}}$ would depend not only on the number of $1$'s and $2$'s in $\mathbf{i}$, but also on the order in which the digits appear.)
}
\end{remark}

\begin{remark} \label{rem:other-orientations}
{\rm
While we assumed in Theorem \ref{thm:dimension-bounds} that the maps $f_1,\dots,f_N$ are orientation preserving, the same result holds when one or more maps are instead made orientation-reversing. We briefly indicate here how the proof of the lower bound would need to be modified. For ease of presentation, consider again the case $N=M=2$ and suppose $f_1(x)=-cx+b_1$ and $f_2(x)=cx+b_2$, where we still assume that $f_1([0,1])$ lies to the left of $f_2([0,1])$. In this case, (i) and (ii) at the beginning of the proof of the lower bound in Theorem \ref{thm:dimension-bounds} become, for $k\geq 3$:
\begin{enumerate}[(i)]
\item $x\in I_{\mathbf{i},1,1,2^{k-3},1}$ and $y\in I_{\mathbf{i},2,1,2^{k-3}}$;
\item $x\in I_{\mathbf{i},1,1,2^{k-3}}$ and $y\in I_{\mathbf{i},2,1,2^{k-3},1}$.
\end{enumerate}
For $k=2$ we have $x\in I_{\mathbf{i},1,2}$ and $y\in I_{\mathbf{i},2}$, or $x\in I_{\mathbf{i},1}$ and $y\in I_{\mathbf{i},2,2}$. The constant $K$ from \eqref{eq:E_p-calculation} is now replaced with the sum
\[
\sum_{k=3}^\infty r^{-kt}p^4(1-p)^{2(k-3)+1}
\]
(plus an unimportant term for $k=2$), which converges for 
$$t<s(p):=\frac{\log\big(p^2+(1-p)^2\big)}{\log r}$$ 
because the latter implies
\[
r^{-t}(1-p)^2<\frac{(1-p)^2}{p^2+(1-p)^2}<1.
\]
Other combinations of orientations can be dealt with similarly.
}
\end{remark}

\section{Hausdorff measure in the symmetric case} \label{sec:symmetric}

Throughout this section, we assume that we are in the symmetric case where all of the maps $f_1,\dots,f_N$ are chosen with the same probability; that is, $\pp=(1/N,\dots,1/N)$.

Corollary \ref{cor:symmetric-case} states that if $M\geq N$, then the Hausdorff and box-counting dimensions of $F$ are almost surely equal to $-\log N/\log r$, the dimension of $C$. In this section we consider the Hausdorff {\em measure} of $F$ in the critical dimension. By using more information about the overlaps between the sets $A(\mathbf{i};\mathbf{j})$ for $\mathbf{j}\in\CI_n$, we can say something slightly stronger and show the following:

\begin{proposition} \label{prop:symmetric-case-measure-zero}
Assume $M=N$ and $\pp=(1/N,\dots,1/N)$. Then $\CH^s(F)=0$ almost surely with respect to $\mu_\pp$, where $s=-\log N/\log r$.
\end{proposition}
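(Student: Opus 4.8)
The plan is to cover $F$ by the $Z_n$ level-$n$ basic intervals it actually meets, and to show that the expected number $E_\pp(Z_n)$ of occupied intervals is $o(N^n)$; since each such interval has diameter $r^n$ and $r^s=1/N$, this forces the $s$-dimensional Hausdorff measure to vanish. Recall from \eqref{eq:expected-number} that $E_\pp(Z_n)=\sum_{\mathbf{i}\in\Omega_n} a_{\mathbf{i}}$, where $a_{\mathbf{i}}$ (defined in \eqref{eq:a_i-definition}) is the probability that $\mathbf{i}$ is realized as the label sequence of at least one length-$n$ path. The union bound \eqref{eq:simply-a_i-bound} gives only the useless estimate $a_{\mathbf{i}}\leq M^n N^{-n}=1$, so the crux is to extract a genuinely better bound by accounting for the overlaps among the events $A(\mathbf{i};\mathbf{j})$, $\mathbf{j}\in\CI_n$.

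First I would use the symmetry $\pp=(1/N,\dots,1/N)$ together with the recursive structure of the tree (with $M=N$) to show that $a_{\mathbf{i}}$ depends only on $n=|\mathbf{i}|$; call the common value $a_n$. Splitting the tree at the root into its $N$ independent subtrees, the word $\mathbf{i}=(i_1,\dots,i_n)$ appears if and only if for some child $c$ the root-edge to $c$ carries label $i_1$ (probability $1/N$) and, independently, the suffix $(i_2,\dots,i_n)$ appears among the paths of the subtree rooted at $c$ (probability $a_{n-1}$). As the $N$ children contribute independently, this gives the \emph{exact} recursion
\[
a_n=1-\Bigl(1-\tfrac{a_{n-1}}{N}\Bigr)^N,\qquad a_0=1.
\]
A short analysis of $\phi(x):=1-(1-x/N)^N$ on $[0,1]$ — it is increasing, satisfies $\phi(x)<x$ for $x\in(0,1]$ (because $h(x):=x-\phi(x)$ has $h(0)=0$ and $h'(x)=1-(1-x/N)^{N-1}>0$), and has $0$ as its only fixed point — shows that $a_n$ decreases monotonically to $0$. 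Consequently $E_\pp(Z_n)=N^n a_n=o(N^n)$.

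The remainder is a routine covering argument. For every $\omega$, the set $F(\omega)$ is covered by its $Z_n(\omega)$ occupied level-$n$ intervals, each of diameter $r^n$; since $(r^n)^s=(r^s)^n=N^{-n}$, we obtain the pointwise bound $\CH^s_{r^n}(F)\leq Z_n N^{-n}$, whence $E_\pp[\CH^s_{r^n}(F)]\leq N^{-n}E_\pp(Z_n)=a_n$. The family of admissible covers shrinks as $n$ grows, so $\CH^s_{r^n}(F)$ is non-decreasing in $n$ with pointwise supremum $\CH^s(F)$. Fixing $m$ and using $\CH^s_{r^m}(F)\leq\CH^s_{r^n}(F)$ for $n\geq m$ gives $E_\pp[\CH^s_{r^m}(F)]\leq a_n\to 0$, hence $E_\pp[\CH^s_{r^m}(F)]=0$ and so $\CH^s_{r^m}(F)=0$ almost surely, for every $m$. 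Intersecting over the countably many $m$ and using $\CH^s(F)=\sup_m\CH^s_{r^m}(F)$ yields $\CH^s(F)=0$ almost surely.

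The main obstacle is the second step: replacing the trivial bound $a_{\mathbf{i}}\leq 1$ by the exact recursion is exactly where the extra overlap information enters, and it is special to the symmetric case $M=N$. Here $\phi'(0)=M/N=1$ puts $0$ on the boundary between an attracting and a repelling fixed point, which is precisely what drives $a_n\to 0$ (one checks in fact that $a_n\asymp 1/n$, so $E_\pp(Z_n)$ is only of order $N^n/n$). For $M>N$ the same derivation gives $a_n=1-(1-a_{n-1}/N)^M$ with $\phi'(0)=M/N>1$, a strictly positive fixed point, and $E_\pp(Z_n)$ of order $N^n$; that argument would then yield no information about the Hausdorff measure, consistent with the hypothesis $M=N$ in the statement.
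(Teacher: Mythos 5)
Your proof is correct and follows essentially the same route as the paper: the same exact recursion $a_n=1-(1-a_{n-1}/N)^N$ (the paper's Lemma \ref{lem:dichotomy} with $M=N$), the same monotonicity argument forcing $a_n\to 0$, and the same covering of $F$ by the $Z_n(\omega)$ occupied level-$n$ intervals of diameter $r^n$. The only (cosmetic) difference is that the paper, wary of whether $\omega\mapsto\CH^s(F(\omega))$ is measurable, first bounds it pointwise by the manifestly measurable decreasing limit $Y(\omega)=\lim_n N^{-n}Z_n(\omega)$ and only then takes expectations, whereas you apply $E_\pp$ directly to $\CH^s_{r^m}(F)$; your argument is repaired by the same device, since $\CH^s_{r^m}(F(\omega))\le N^{-n}Z_n(\omega)$ for all $n\ge m$ and $Z_n$ is a finite sum of indicators.
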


(By this we mean that there is a set of $\omega$'s of full measure under $\mu_\pp$ for which $\CH^s(F(\omega))=0$; note that it is not obvious {\em a priori} that the function $\omega\mapsto \CH^s(F(\omega))$ is measurable. We will therefore bound it by a function whose measurability is not in doubt and which takes the value $0$ for $\mu_\pp$-almost all $\omega$.)

As a consequence of Proposition \ref{prop:symmetric-case-measure-zero}, we point out that when $M=N$ and $r=1/N$, our construction generates a random subset of the interval which has Lebesgue measure zero but full Hausdorff dimension one almost surely.

Recall the definition of $a_{\mathbf{i}}$ from \eqref{eq:a_i-definition}. We also continue to use $Z_n$ for the number of basic intervals at level $n$ in the construction of $C$ needed to cover $F$. We first prove the following dichotomy:

\begin{lemma} \label{lem:dichotomy}
\begin{enumerate}[(i)]
\item If $M\leq N$, then $E_\pp(Z_n)/N^n\to 0$.
\item If $M>N$, then $E_\pp(Z_n)/N^n\to \gamma$, for some number $\gamma\in(0,1)$.
\end{enumerate}
\end{lemma}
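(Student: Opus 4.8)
The plan is to collapse the whole statement into a single one-dimensional recursion. Since $\pp=(1/N,\dots,1/N)$, the probability $a_{\mathbf{i}}$ that some path of length $n$ in the $M$-ary tree carries the label word $\mathbf{i}$ is invariant under permuting the symbols $\{1,\dots,N\}$, and hence depends only on $n:=|\mathbf{i}|$; write $a_n:=a_{\mathbf{i}}$. By \eqref{eq:expected-number} this gives $E_\pp(Z_n)=\sum_{\mathbf{i}\in\Omega_n}a_{\mathbf{i}}=N^n a_n$, so the lemma is exactly a statement about the limiting behavior of the scalar sequence $(a_n)$, and it suffices to show $a_n\to 0$ when $M\leq N$ and $a_n\to\gamma\in(0,1)$ when $M>N$.

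Next I would derive a recursion for $a_n$ by looking at the $M$ edges leaving the root. These edges, together with the $M$ subtrees hanging below them, are mutually independent, and each subtree is itself an independent copy of the full $M$-ary tree under $\mu_\pp$. A fixed top edge contributes a matching path for the target $(i_1,\dots,i_n)$ precisely when it is labeled $i_1$ (probability $1/N$) and the subtree below it contains a path matching the suffix $(i_2,\dots,i_n)$ (probability $a_{n-1}$); these two events are independent, so the edge fails to start a match with probability $1-a_{n-1}/N$. As the $M$ top edges are independent, the probability that no path matches is $(1-a_{n-1}/N)^M$, which yields the recursion $a_n=g(a_{n-1})$, where $g(x):=1-(1-x/N)^M$, with initial value $a_0=1$ (the empty word is always realized by the root).

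The remaining step is an elementary fixed-point analysis of $g$ on $[0,1]$. For $M\geq 2$ the map $g$ is increasing and strictly concave, with $g(0)=0$, $g(1)=1-(1-1/N)^M<1$, and $g'(0)=M/N$. If $M\leq N$ then $g'(0)\leq 1$, so by concavity $g$ lies below its tangent line $y=g'(0)x\leq x$ at the origin; hence $g(x)<x$ for all $x\in(0,1]$, and since $g$ is increasing and $a_0=1$, the orbit $(a_n)$ is decreasing with its only possible limit the fixed point $0$, giving part (i). If $M>N$ then $g'(0)>1$, so $g(x)>x$ just to the right of $0$; combined with $g(1)<1$ and concavity, this forces a unique fixed point $\gamma\in(0,1)$ with $g(x)>x$ on $(0,\gamma)$ and $g(x)<x$ on $(\gamma,1]$. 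Starting from $a_0=1>\gamma$, monotonicity of $g$ traps the orbit in $(\gamma,1]$ and makes it decreasing, so $a_n\downarrow\gamma\in(0,1)$, giving part (ii).

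The only step requiring genuine care is the justification of the recursion, namely that the $M$ subtrees below the root are independent copies of the whole tree. This follows from the product structure of $\mu_\pp$ and the self-similar way the edges are indexed by $\kappa$, but it should be spelled out. Once that is in hand, everything reduces to the concave interval-map argument above, so I expect no real obstacle beyond this bookkeeping.
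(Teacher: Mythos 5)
Your proposal is correct and follows essentially the same route as the paper: reduce to the scalar sequence $a_n=a_{1^n}$ by symmetry, derive the recursion $a_n=1-(1-a_{n-1}/N)^M$ by conditioning on the root's $M$ independent edges, and run the concave fixed-point analysis distinguishing $g'(0)=M/N\leq 1$ from $g'(0)>1$. The only cosmetic difference is that the paper obtains the recursion via a binomial sum over the number of correctly labeled root edges, which collapses to the same closed form you compute directly.
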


\begin{proof}
By symmetry, $a_{\mathbf{i}}$ is the same for all $\mathbf{i}\in\Omega_n$, so we just consider the label sequence $\mathbf{i}=1^n$. Set $\pi_n:=a_{1^n}$. Then, by reasoning similar to that in Remark \ref{rem:quadratic-recursions}, $\pi_n$ satisfies the recursion
\begin{align*}
\pi_n&=\sum_{k=0}^M\binom{M}{k}\left(\frac{1}{N}\right)^k\left(1-\frac{1}{N}\right)^{M-k}\left(1-(1-\pi_{n-1})^k\right)\\
&=1-\left(1-\frac{\pi_{n-1}}{N}\right)^M.
\end{align*}
Clearly $\pi_n$ is decreasing in $n$, so its limit as $n\to\infty$ exists. Denote this limit by $\gamma$; then $\gamma$ is a root of the equation
\[
x=1-\left(1-\frac{x}{N}\right)^M=:h(x).
\]
Observe that
\[
h'(x)=\frac{M}{N}\left(1-\frac{x}{N}\right)^{M-1}, \qquad h''(x)=-\frac{M(M-1)}{N^2}\left(1-\frac{x}{N}\right)^{M-2}.
\]
This shows that $h$ is strictly concave on $[0,1]$, and if $M\leq N$, then $h'\leq 1$ on $[0,1]$. Since furthermore, $h(0)=0$, it follows that when $M\leq N$, $h$ has a unique fixed point $x=0$, and hence $\pi_n\to 0$ as $n\to\infty$. Part (i) of the lemma now follows since
\begin{equation} \label{eq:Z_n-limit}
E_\pp(Z_n)=\sum_{\mathbf{i}\in\Omega_n} a_{\mathbf{i}}=N^n a_{1^n}=N^n \pi_n.
\end{equation}

On the other hand, if $M>N$, then $h'(0)=M/N>1$ whereas $h(1)<1$, so $h$ has an additional fixed point $\gamma\in(0,1)$. In this case, the fixed point $x=0$ is repelling, while the fixed point $\gamma$ is attracting. Thus, $\pi_n\to\gamma$ as $n\to\infty$, and part (ii) of the lemma follows by \eqref{eq:Z_n-limit}.
\end{proof}

We also need the following standard result from probability, of which we omit the proof.

\begin{lemma} \label{lem:decreasing-rv-sequence}
Let $\{X_n\}_{n\in\NN}$ be a sequence of random variables defined on a probability space $(\Omega,\CF,P)$. Suppose $E(X_n)\to 0$ as $n\to\infty$, and for all $n\in\NN$ and all $\omega\in\Omega$ we have $0\le X_{n+1}(\omega)\le X_n(\omega)$. Then $X_n\to 0$ almost surely.
\end{lemma}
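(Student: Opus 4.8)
The plan is to exploit the monotonicity to pass to a pointwise limit, and then either apply Markov's inequality or compare expectations directly. First I would observe that, since for each fixed $\omega$ the sequence $X_n(\omega)$ is non-increasing and bounded below by $0$, the pointwise limit
\[
X(\omega):=\lim_{n\to\infty}X_n(\omega)=\inf_{n\in\NN}X_n(\omega)
\]
exists and satisfies $X(\omega)\ge 0$ for every $\omega$. As a pointwise infimum of countably many $\CF$-measurable functions, $X$ is itself $\CF$-measurable, so the conclusion ``$X_n\to 0$ almost surely'' is exactly the statement that $X=0$ almost surely, which is what I would aim to prove.

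To establish this, fix $\eps>0$. Since $X\le X_n$ for every $n$, we have the inclusion $\{X>\eps\}\subseteq\{X_n>\eps\}$, whence by Markov's inequality
\[
P(X>\eps)\le P(X_n>\eps)\le \frac{E(X_n)}{\eps}
\]
for every $n$ (the bound is meaningful once $n$ is large enough that $E(X_n)<\infty$, which is guaranteed by the hypothesis $E(X_n)\to 0$). Letting $n\to\infty$ gives $P(X>\eps)=0$. Since $\{X>0\}=\bigcup_{k\in\NN}\{X>1/k\}$ is a countable union of null sets, countable subadditivity yields $P(X>0)=0$, i.e.\ $X=0$ almost surely, completing the argument.

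I do not expect any genuine obstacle here, as the result is elementary; the only points deserving a word of care are the existence and measurability of the limit $X$ (both immediate from monotonicity and boundedness below) and the application of Markov's inequality only for indices $n$ with $E(X_n)$ finite. An alternative and equally short route bypasses Markov's inequality altogether: by monotonicity of expectation, $0\le E(X)\le E(X_n)$ for every $n$ (since $0\le X\le X_n$), so $E(X)\le\lim_{n\to\infty}E(X_n)=0$, and a non-negative random variable with zero expectation vanishes almost surely. I would likely present the Markov-inequality version, since it makes the null set explicit and avoids any appeal to convergence theorems for the expectations.
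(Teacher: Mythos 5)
Your proof is correct and complete. The paper explicitly omits the proof of this lemma as a standard fact, so there is no argument in the text to compare against; your Markov-inequality argument (and the even shorter variant via $0\le E(X)\le E(X_n)\to 0$ for the monotone limit $X$) is precisely the standard proof the authors had in mind, and your attention to the measurability of the limit and to the finiteness of $E(X_n)$ is appropriate.
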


\begin{proof}[Proof of Proposition \ref{prop:symmetric-case-measure-zero}]
Given $\delta>0$, choose $n_0$ so large that $r^{n_0}<\delta$. Then since $F(\omega)$ may be covered by $Z_n(\omega)$ intervals of length $r^{n}$, we have for all $n\ge n_0$ and all $\omega\in\Omega$
$$\CH_\delta^s(F(\omega))\le r^{sn}Z_n(\omega)=N^{-n}Z_n(\omega).$$
(Recall $s=-\log N/\log r$).
Note that $Z_{n+1}(\omega)\le NZ_n(\omega)$ for all $\omega$. 
Therefore $N^{-n}Z_n$ is decreasing, and the limit in the following inequality exists for all $\omega$:
$$\CH_\delta^s(F(\omega))\le\lim_{n\to\infty}N^{-n}Z_n(\omega)=:Y(\omega).$$
As this holds for all $\delta>0$, we have
$\CH^s(F(\omega))\le Y(\omega)$
for all $\omega\in\Omega$. Furthermore,
$$E_\pp\Big(N^{-n}Z_n\Big)=\pi_n\to 0 \qquad \text{as }n\to\infty$$
by Lemma \ref{lem:dichotomy} (i). This implies $\mu_\pp(Y=0)=1$ by applying Lemma \ref{lem:decreasing-rv-sequence} with $X_n=N^{-n}Z_n$, $P=\mu_\pp$ and $(\Omega,\CF)$, completing the proof.
\end{proof}

\begin{remark}
{\rm
By a very similar argument, we can show also that $\CH^{s'}(F)=0$ almost surely when $M<N$, where $s'=-\log M/\log r$. Thus, at least in the symmetric case of Corollary \ref{cor:small-M}, we also have that the Hausdorff measure of $F$ in the critical dimension is zero. Unfortunately, we do not know how to prove this, or whether it is in fact true, in the non-symmetric case.
}
\end{remark}

While we did not use Lemma \ref{lem:dichotomy} (ii) in the above proof, it does serve to inform the following conjecture:

\begin{conjecture} \label{con:positive-measure}
When $M>N$ and $\pp=(1/N,\dots,1/N)$, $\CH^s(F)>0$ almost surely with respect to $\mu_\pp$, where $s=-\log N/\log r$.
\end{conjecture}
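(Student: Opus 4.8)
The plan is to prove positivity through the density (mass-distribution) theorem rather than a uniform Frostman bound: if $m$ is a finite Borel measure with $m(F)>0$ and $\limsup_{\rho\to0} m(B(x,\rho))/\rho^s\le C<\infty$ for every $x$ in a set of positive $m$-measure, then $\CH^s(F)\ge m(F)/C>0$ (see \cite{Falconer2}). The natural candidate is the random probability measure $m_\omega$ from Section \ref{sec:lower-bound}. In the symmetric case it has the correct average scaling: by \eqref{random measure}, $E_\pp[m_\omega(I_{\mathbf i})]=\prod_{l=1}^N (1/N)^{c_l(\mathbf i)}=N^{-n}=|I_{\mathbf i}|^s$ for $\mathbf i\in\Omega_n$, and $\sum_{\mathbf i\in\Omega_n}m_\omega(I_{\mathbf i})=1$. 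Writing $W_{\mathbf i}:=M^{|\mathbf i|}m_\omega(I_{\mathbf i})=\sum_{\mathbf j\in\CI_{|\mathbf i|}}1_{A(\mathbf i;\mathbf j)}(\omega)$ for the number of root paths carrying the label word $\mathbf i$, the needed density estimate reduces, after comparing balls with basic intervals, to
\[
\limsup_{n\to\infty}\frac{W_{\mathbf i(x)|_n}}{(M/N)^n}<\infty \qquad\text{for $m_\omega$-a.e. } x,
\]
where $\mathbf i(x)$ is the coding of $x$.

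The structural fact I would exploit is that, for a \emph{fixed} coding $\mathbf i=(i_1,i_2,\dots)$, the sequence $(W_{\mathbf i|_n})_{n\ge0}$ is a Galton--Watson process: each path labeled $\mathbf i|_n$ extends to $\mathrm{Bin}(M,1/N)$ paths labeled $\mathbf i|_{n+1}$, independently across paths, so the offspring law is $\mathrm{Bin}(M,1/N)$ with mean $M/N>1$. This is exactly the recursion behind Lemma \ref{lem:dichotomy}. Moreover, sampling $x$ under the annealed law $\mu_\pp\times m_\omega$ amounts to labeling the $M$-ary tree uniformly and then reading off the label sequence of an independent uniformly chosen infinite ray; that is, the coding of an $m_\omega$-typical point is precisely the \emph{spine} of this Galton--Watson process. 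By the spinal (size-biased) decomposition of supercritical Galton--Watson trees, together with the Kesten--Stigum theorem (whose $E[\xi\log^+\xi]<\infty$ hypothesis is trivial for bounded offspring), $W_{\mathbf i(x)|_n}/(M/N)^n$ converges almost surely along the spine to a finite limit. A Fubini argument then gives, for $\mu_\pp$-a.e.\ $\omega$ and $m_\omega$-a.e.\ $x$, that $\limsup_n m_\omega(I_n(x))/|I_n(x)|^s<\infty$, where $I_n(x)$ is the level-$n$ basic interval containing $x$.

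To pass from basic intervals to genuine balls I would use the open set condition with gaps: when $r<1/N$ the level-$n$ intervals are separated by gaps of order $r^{n-1}$, so a ball $B(x,\rho)$ with $\rho\asymp r^n$ meets at most a bounded number $c_0$ of level-$n$ intervals, namely $I_n(x)$ and its immediate neighbours. It then remains to bound the masses of those neighbours. For an $m_\omega$-typical $x$ the coding is uniform, so a neighbouring word differs from $\mathbf i(x)|_n$ only in its last $\ell$ digits, where $\ell$ has a geometric tail; one couples the neighbour count to the same spine and disposes of the rare long ``carries'' by Borel--Cantelli. Stratifying by the value of the (a.e.\ finite) limit constant and selecting a level set of positive $m_\omega$-measure, the density theorem would then yield $\CH^s(F)>0$ almost surely, after the same reduction to a measurable minorant of $\omega\mapsto\CH^s(F(\omega))$ used in the proof of Proposition \ref{prop:symmetric-case-measure-zero}.

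The hard part will be exactly this passage from basic intervals to balls, i.e.\ the simultaneous control of the neighbouring intervals' masses. The spine argument controls the density cleanly along the nested sequence $I_n(x)$, but a ball may protrude into adjacent basic intervals whose label words lie \emph{off} the spine and whose counts are therefore not governed by the size-biased limit. Making this control uniform and rigorous, while keeping $\omega\mapsto\CH^s(F(\omega))$ measurable, is the crux; I suspect it is precisely this technical knot that has kept the statement at the level of a conjecture.
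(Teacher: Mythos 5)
First, a point of order: the statement you are proving is Conjecture \ref{con:positive-measure} in the paper, and the paper contains \emph{no proof of it}. The authors offer only heuristic support (Lemma \ref{lem:dichotomy}(ii), which gives $E_\pp(Z_n)/N^n\to\gamma>0$) together with the remark that a stronger conclusion is known for $M\geq 2N$ via \cite{Benjamini-Kesten}, explicitly leaving $N<M<2N$ open. So the only question is whether your argument settles the conjecture, and it does not. The first half of your plan is sound, and can even be streamlined: for a fixed coding $\mathbf i$ the counts $W_{\mathbf i|_n}$ do form a Galton--Watson process with offspring law $\mathrm{Bin}(M,1/N)$, the normalized counts $W_{\mathbf i|_n}/(M/N)^n$ form a nonnegative martingale bounded in $L^2$ uniformly in $\mathbf i$ (the offspring law is bounded, so no Kesten--Stigum machinery is needed), and Doob's $L^2$ maximal inequality combined with the identity $m_\omega(I_{\mathbf i})=N^{-n}\cdot W_{\mathbf i}/(M/N)^n$ yields, under the annealed law $\mu_\pp(d\omega)\,m_\omega(d\mathbf i)$, that $\sup_n m_\omega(I_n(x))/|I_n(x)|^s<\infty$ for $\mu_\pp$-a.e.\ $\omega$ and $m_\omega$-a.e.\ $x$. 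The stratification to a level set of positive $m_\omega$-measure and the measurability reduction are also fine.

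The gap is exactly where you locate it, but the repair you propose does not close it. First, your separation claim is wrong: the open set condition as stated only forces the \emph{interiors} $f_i((0,1))$ to be disjoint, so consecutive level-$n$ intervals may touch even when $r<1/N$, and in the most relevant case $r=1/N$ there are no gaps at all; hence a ball $B(x,\rho)$ with $\rho\asymp r^n$ genuinely charges the neighbours $I_n^{\pm}(x)$. The right neighbour's word branches off the coding of $x$ at level $n-\ell_n$, where $\ell_n$ is the length of the terminal run of $N$'s, and the only pointwise bound available from the spine estimate is $m_\omega(I_n^{+}(x))\le m_\omega(I_{n-\ell_n}(x))\le C\,N^{\ell_n}|I_n(x)|^s$, which is useless because $\limsup_n \ell_n=\infty$ for a.e.\ coding. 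The second-moment computation (the same one as in Section \ref{sec:lower-bound}) gives only $\EE^{*}\bigl[N^n m_\omega(I_n^{+}(x))\bigr]\le C$ uniformly in $n$; since the neighbour intervals $I_n^{+}(x)$ for different $n$ do not form a single nested (let alone martingale) family --- $I_{n+1}^{+}(x)$ sits inside $I_n(x)$ when $i_{n+1}<N$ and inside $I_n^{+}(x)$ when $i_{n+1}=N$ --- Chebyshev gives only $\PP^{*}\bigl(N^n m_\omega(I_n^{+}(x))>A\bigr)\le C/A$, a bound that is not summable in $n$, so the Borel--Cantelli step you invoke fails. What is missing is a maximal inequality controlling the whole sequence of off-spine neighbour masses simultaneously (or a different measure, or a covering argument that avoids balls), and none is supplied. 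As it stands your text is a plausible programme with an acknowledged hole at its decisive step; the statement remains a conjecture.
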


\begin{remark}
{\rm
When $N=2$ and $M\geq 4$, something stronger than Conjecture \ref{con:positive-measure} is true: With $\mu_\pp$-probability 1, $F$ contains a similar copy of the full Cantor set $C$ (or contains an interval in case $r=1/2$), and $F$ is even equal to all of $C$ with positive probability. As pointed out in \cite{BGS}, this is a consequence of \cite[Corollary 6.2]{Benjamini-Kesten}. The argument there is easily extended to obtain the same conclusion for arbitrary $N\geq 2$ and $M\geq 2N$. This still leaves a gap in our knowledge: What happens when $N<M<2N$?
}
\end{remark}

\begin{remark}
{\rm
In view of Proposition \ref{prop:symmetric-case-measure-zero} it is natural to ask whether in the case $M=N$ there exists a gauge function $\varphi$ (that is, a strictly increasing continuous function $\varphi:[0,\infty)\to[0,\infty)$ with $\varphi(0)=0$) such that
\begin{equation} \label{eq:gauge-function}
0<\mathcal{H}^\varphi(F(\omega))<\infty \qquad\mbox{for $\mu_\pp-$ almost every $\omega$},
\end{equation}
where for a set $E$,
\[
\mathcal{H}^\varphi(E):=\lim_{\delta\searrow 0} \inf\left\{\sum_i \varphi(|U_i|): (U_i)\ \mbox{is a $\delta$-cover of $E$}\right\}.
\]
For instance, for $N=M=2$ we can show that
\begin{equation*} 
\frac{1}{1+n}\le \pi_n\le\frac{4}{4+n} \qquad \forall\,n\geq 0.
\end{equation*}
We expect that similar inequalities hold in general when $N=M$.
These inequalities, together with $E_\pp(Z_n)=N^n \pi_n$ suggest that a good candidate is the function $\varphi(x)=x^s\log(1/x)$, where $s=-\log N/\log r$. Unfortunately, we have been unable to prove either inequality in \eqref{eq:gauge-function} for this $\varphi$.
}
\end{remark}

\section{Deterministic Case} \label{sec:deterministic}

In this section we consider a simple deterministic scheme for labeling the tree, and compute the Hausdorff dimension of the resulting subsets of $C$. For the sake of simplicity, we take $N=M=2$ and denote the maps of the IFS by $f_0$ and $f_1$. For any integer $m\ge 2$ let $g_m:\mathcal{I}^*\to\{0,1\}$ be defined by
$$g_m(\textbf{i})=
\begin{cases}
1 & \text{ if }\kappa(\textbf{i})\equiv 0\!\!\pmod m,\\
0 & \text{ otherwise.}
\end{cases}$$
Then define the subset $F_m\subset C$ by
$$F_m:=\Big\lbrace x=\lim_{n\to\infty}f_{g_m(\textbf{i}|_1)}\circ f_{g_m(\textbf{i}|_2)} \dots\circ f_{g_m(\textbf{i}|_n)}([0,1]):\textbf{i}\in\mathcal{I}\Big\rbrace.$$
Essentially, $F_m$ is generated by the binary tree for which every $m^{\text{th}}$ edge is labeled $1$ while all others are labeled $0$, see Figure \ref{C_3 example}. In the notation of the previous sections, $F_m=F(\omega)$ where $\omega=(0^{m-1}1)^\infty$.

Observe that $F_2=C$, since $g_2$ is precisely the usual labeling of the full binary tree. By contrast, for $m\geq 3$, $F_m$ has an interesting connection to golden mean-like shifts of finite type.

\begin{figure}
    \centering
    \begin{tikzpicture}
    \tikzstyle{level 1}=[level distance=10mm,sibling distance=3.5cm]
    \tikzstyle{level 2}=[level distance=15mm,sibling distance=1.5cm]
    \tikzstyle{level 3}=[level distance=15mm,sibling distance=1cm]
    \tikzstyle{solid node}=[circle,draw,inner sep=1.2,fill=black];
\node (0) [solid node]{ }
child{node[solid node]{ } 
    child{node[solid node]{ } 
        child{node[solid node]{ } edge from parent node[left]{0}} 
        child{node[solid node]{ } edge from parent node[right]{0}} 
        edge from parent node[left]{1}} 
    child{node[solid node]{ } 
        child{node[solid node]{ } edge from parent node[left]{1}} 
        child{node[solid node]{ } edge from parent node[right]{0}} 
        edge from parent node[right]{0}} 
    edge from parent node[left]{0} }
child{node[solid node]{ }
    child{node[solid node]{ } 
    child{node[solid node]{ } edge from parent node[left]{0}} 
    child{node[solid node]{ } edge from parent node[right]{1}}
    edge from parent node[left]{0}} 
    child{node[solid node]{ } 
    child{node[solid node]{ } edge from parent node[left]{0}}
    child{node[solid node]{ } edge from parent node[right]{0}}
    edge from parent node[right]{1}} 
    edge from parent node[right]{0}}; 
\end{tikzpicture}\qquad\quad
\begin{tikzpicture}[line width=1mm]
  \draw (0,0) -- (6,0);
  \draw (0,-1) -- (2,-1);
  \draw (0,-2) -- (0.667,-2);
  \draw (1.333,-2) -- (2,-2);
  \draw (0,-3) -- (0.222,-3);
  \draw (0.444,-3) -- (0.667,-3);
  \draw (1.333,-3) -- (1.556,-3);
\end{tikzpicture}
    \caption{An example tree with $m=3$ (left) and the third level approximation of $F_3$ (right).}
    \label{C_3 example}
\end{figure}
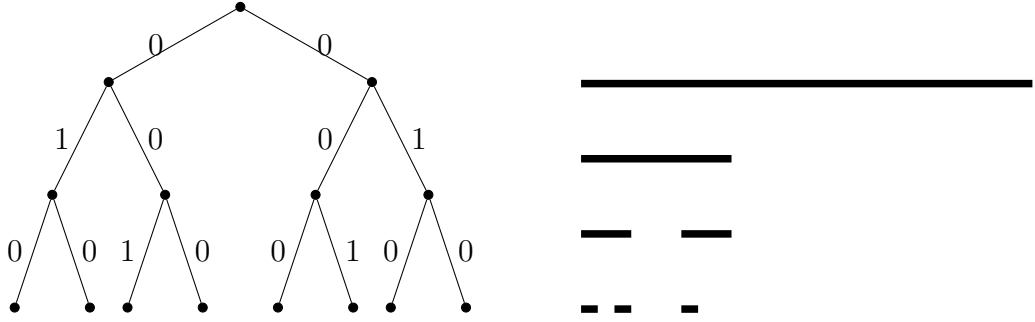

For $L\in\NN$, let $X_L\subseteq\{0,1\}^\NN$ be the one-sided subshift of finite type with forbidden words list
$$\mathcal{F}_L:=\big\lbrace10^k1:0\le k\le L-1\big\rbrace.$$
Thus, a sequence $\mathbf{i}\in\{0,1\}^\NN$ lies in $X_L$ if and only if any two consecutive $1$'s in $\mathbf{i}$ are separated by a string of at least $L$ consecutive $0$'s. It is well known that $X_L$ has entropy $\log\rho_L$, where $\rho_L\in(1,2)$ is the positive real root of $1+x^L=x^{L+1}$ (see \cite[Exercise~4.3.7]{Lind and Marcus}). In particular, $\rho_1=(1+\sqrt{5})/2$ is the golden ratio.

\begin{proposition}
For an integer $m\geq 3$, let $L$ be the integer such that $m\in\{2^{L+1}-1,2^{L+1},\dots,2^{L+2}-2\}$. Then
\begin{equation} \label{eq:C_m-projection}
\pi^{-1}[F_m]=\{\mathbf{j}=(j_1,j_2,\dots)\in X_L: j_1=j_2=\dots=j_L=0\},
\end{equation}
where $\pi$ is the natural projection of the symbolic space onto the Cantor set $C$, i.e.
\[
\pi(\mathbf{j}):=\lim_{n\to\infty} f_{j_1}\circ \dots \circ f_{j_n}(0) \qquad\mbox{for\ \ $\mathbf{j}=(j_1,j_2,\dots)\in\{0,1\}^\NN$}.
\]
Consequently,
\begin{equation} \label{eq:C_m-dimension}
\dim_H F_m=\frac{\log\rho_L}{-\log r}.
\end{equation}
(Recall that $r$ is the common contraction ratio of the maps $f_0$ and $f_1$.)
\end{proposition}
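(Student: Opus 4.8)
The plan is to reduce the set identity \eqref{eq:C_m-projection} to a statement about a finite automaton, prove the two inclusions, and then read off \eqref{eq:C_m-dimension} from the entropy of $X_L$.

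\textbf{Symbolic reduction.} First I would obtain a closed form for the edge labels along a path. Fix $\mathbf{i}=(i_1,i_2,\dots)\in\CI$ (so $i_k\in\{1,2\}$) and set $\tilde\kappa_n:=\kappa(\mathbf{i}|_n)+1$. From the definition of the order $<_*$ one computes the ``bijective binary'' expansion
\[
\tilde\kappa_n=\sum_{k=1}^n i_k\,2^{n-k},\qquad\text{so that}\qquad \tilde\kappa_n=2\tilde\kappa_{n-1}+i_n,\quad \tilde\kappa_n\in[2^n-1,\,2^{n+1}-2];
\]
in particular $\mathbf{w}\mapsto\tilde\kappa_n$ is a bijection from length-$n$ words onto the integers in $[2^n-1,2^{n+1}-2]$. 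The level-$n$ label equals $1$ exactly when $m\mid\tilde\kappa_n$ (as illustrated for $m=3$ in Figure \ref{C_3 example}). Thus the label sequence of $\mathbf{i}$ is recorded by the finite automaton on residues modulo $m$ with transitions $s\mapsto 2s+1$ and $s\mapsto 2s+2$, started at state $0$, which emits a $1$ precisely upon returning to state $0$. Proving \eqref{eq:C_m-projection} then amounts to showing that the set of output sequences of this automaton is exactly $Y_L:=\{\mathbf{j}\in X_L: j_1=\dots=j_L=0\}$.

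\textbf{The inclusion $\subseteq$.} This direction is forced by the size of $m$, recalling that $m\in[2^{L+1}-1,2^{L+2}-2]$. For $n\le L$ we have $1\le\tilde\kappa_n\le 2^{n+1}-2\le 2^{L+1}-2<m$, so $m\nmid\tilde\kappa_n$ and hence $j_1=\dots=j_L=0$. Similarly, if $j_n=1$ then $\tilde\kappa_n$ is a multiple of $m$, and for $1\le d\le L$ the recursion gives $\tilde\kappa_{n+d}=2^d\tilde\kappa_n+r$ with $r\in[2^d-1,2^{d+1}-2]\subseteq[1,m-1]$, whence $\tilde\kappa_{n+d}\equiv r\not\equiv0\pmod m$ and $j_{n+d}=0$. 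Thus any two consecutive $1$'s are separated by at least $L$ zeros, i.e. no word of $\mathcal{F}_L$ occurs, so the label sequence lies in $Y_L$.

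\textbf{The inclusion $\supseteq$ (the heart of the proof).} Given a target $\mathbf{j}\in Y_L$ with $1$'s at positions $n_1<n_2<\dots$, I must produce a path whose automaton first returns to state $0$ exactly at these times; since $\mathbf{j}\in Y_L$, the gaps $n_1$ and $n_{k+1}-n_k$ are all $\ge L+1$. So it suffices to show that, from any occurrence of state $0$ (including the start), the set of first-return times to $0$ is exactly $\{L+1,L+2,L+3,\dots\}$. The key device is that the residue $m-1$ is a \emph{stall-and-drop} state for every $m$: since $2(m-1)+1\equiv m-1$ and $2(m-1)+2\equiv 0\pmod m$, the input $1$ keeps the state at $m-1$ while the input $2$ sends it to $0$. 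Now fix a state-$0$ occurrence at level $n_0$, where $\tilde\kappa_{n_0}=u_0$ is a multiple of $m$. The reachable values at level $n_0+L+1$ fill the integer interval $[\,2^{L+1}u_0+2^{L+1}-1,\ 2^{L+1}u_0+2^{L+2}-2\,]$, and every such value comes from a unique word whose intermediate states (levels $n_0+1,\dots,n_0+L$) are congruent modulo $m$ to elements of $[1,m-1]$, hence never $0$. Because $m\in[2^{L+1}-1,2^{L+2}-2]$, this interval contains a value $\equiv0$ (namely $2^{L+1}u_0+m$) and a value $\equiv m-1\pmod m$: the former realizes the minimal return time $L+1$, and from the latter I may stall at $m-1$ for any number $s\ge0$ of steps and then drop, realizing the return time $L+2+s$. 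This yields every return time $\ge L+1$. Matching the gaps of $\mathbf{j}$ one at a time then produces the required path (for targets with finitely many $1$'s one finishes by stalling at $m-1$ forever), proving $\supseteq$ and hence \eqref{eq:C_m-projection}.

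\textbf{Dimension.} Finally, \eqref{eq:C_m-dimension} follows from \eqref{eq:C_m-projection}. The condition $j_1=\dots=j_L=0$ merely intersects $X_L$ with a cylinder, so $Y_L$ has the same topological entropy $\log\rho_L$ as $X_L$; consequently the number of level-$n$ basic intervals of $C$ that meet $F_m=\pi(Y_L)$ equals the number of length-$n$ words of $Y_L$, which grows like $\rho_L^{\,n}$. Since these intervals have length $r^n$, this gives the box-counting bound $\dim_B F_m\le \log\rho_L/(-\log r)$. For the matching lower bound one pushes the Parry (maximal-entropy) measure of $X_L$ forward by $\pi$ and applies the mass distribution principle, using that $C$ satisfies the open set condition, so the level-$n$ cylinders have disjoint interiors and diameter $r^n$; this yields $\dim_H F_m\ge \log\rho_L/(-\log r)$ and hence equality. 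The main obstacle is the converse inclusion, and it is the stall-and-drop residue $m-1$ that makes the first-return analysis go through uniformly in $m$.
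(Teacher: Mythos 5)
Your proposal is correct and takes essentially the same route as the paper: the mod-$m$ automaton with transitions $s\mapsto 2s+1$ and $s\mapsto 2s+2$ (the paper's directed graph $G$), the counting of reachable residues after $d$ steps to rule out short returns to $0$, the self-loop at state $m-1$ followed by the drop to $0$ to realize every return time $\geq L+1$, and the entropy of $X_L$ for the dimension. The one step you assert without proof---that the interval $[2^{L+1}-1,\,2^{L+2}-2]$ contains a value $\equiv m-1\pmod m$---does hold but needs a two-case check (take $v=m-1$ if $m\geq 2^{L+1}$ and $v=2m-1$ if $m=2^{L+1}-1$); the paper sidesteps this by reaching state $m-1$ in $L$ steps rather than $L+1$ in the boundary case.
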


\begin{proof}
First, construct a directed graph $G$ whose vertices are labeled $0,1,\dots,m-1$ to represent the residual classes mod $m$.
Draw an edge from each vertex $j$ to $2j+1$ and $2j+2 \mod m$ (see Figure \ref{Directed graph example} for the case $m=6$).
Notice in particular that 
$$2(m-1)+1\equiv m-1 \!\!\pmod{m},\qquad \text{and} \qquad 2(m-1)+2\equiv 0 \!\!\pmod{m}.$$
Thus vertex $m-1$ admits a self loop, as well as an edge connected to vertex $0$. 
Furthermore, for any $k\le L+1$ and $2^{k}-1\le j\le \min\{2^{k+1}-2,m-1\}$, there is a walk of length $k$ from vertex $0$ to vertex $j$. This becomes clear when drawing $G$ as a tree rooted at $0$ and omitting incoming edges, as shown in Figure \ref{Directed graph as a tree}.

\begin{figure}
\centering
\begin{tikzpicture}[->,>=stealth',shorten >=1pt,auto,node distance=2.5cm, thick,scale=3]
  \tikzstyle{every state}=[minimum size=0pt,fill=none,draw=black,text=black]
\node[state] (0) {$0$};
\node[state] (1) [below right of=0] {$1$};
\node[state] (2) [below of=1] {$2$};
\node[state] (3) [below left of=2] {$3$};
\node[state] (4) [above left of=3] {$4$};
\node[state] (5) [above of=4] {$5$};
\path[->,every loop/.style={min distance=0mm, looseness=35}]
	(0) edge[->] (1)
	(0) edge[<->,semithick] (2)
	(1) edge[<->] (3)
	(1) edge[->] (4)
	(2) edge[->] (5)
	(2) edge[->] (0)
	(3) edge[->] (2)
	(4) edge[loop left,->] (4)
	(4) edge[->] (3)
	(5) edge[loop left,->] (5)
	(5) edge[->] (0);
\end{tikzpicture}
\caption{The directed graph $G$, shown for $m=6$. Each vertex $j$ has outgoing edges to $2j+1\!\! \mod m$ and $2j+2\!\! \mod m$. Vertex $m-1$ always has a self-loop and an outgoing edge to $0$.}
    \label{Directed graph example}
\end{figure}
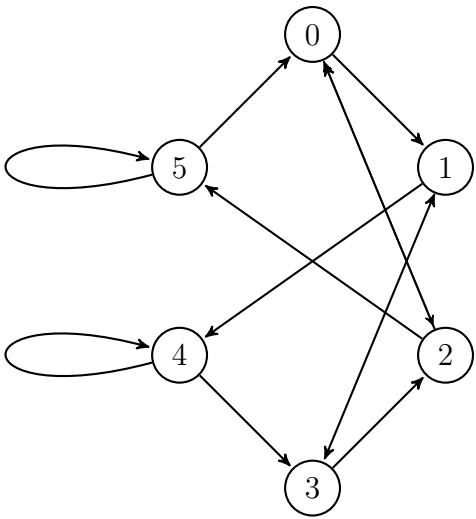

Now, by construction, every word in $\pi^{-1}[F_m]$ can be represented as an infinite walk in $G$ beginning at either vertex $1$ or $2$. That is, any step passing through vertices $1,\dots,m-1$ records a $0$, while a step through vertex $0$ records a $1$. Since $2^{L+1}-1\le m\le2^{L+2}-2$, we have a minimum of $L+1$ steps to walk from vertex $0$ back to itself in $G$.
Thus, no word in $\mathcal{F}_L$ can appear as a sub-word of any word in $\pi^{-1}[F_m]$.
Consequently, $\pi^{-1}[F_m]\subseteq X_L$.

To show the other inclusion, we need to show that for every integer $l\ge L+1$, there exists a walk in $G$ from vertex $0$ back to itself of length $l$ (without passing through $0$ along the way). 
This is clear for $l=L+1$, so assume $l>L+1$. If $m=2^{L+1}-1$, we take the $L$-step walk
$$0\to 2\to 6\to\dots\to 2^{L+1}-2=m-1,$$
then repeat the self loop at vertex $m-1$ for $l-L-1$ times, and finally take $m-1\to 0$ for the last step.
On the other hand, if $m\ge 2^{L+1}$, then as previously discussed we may reach vertex $m-1$ in $L+1$ steps, repeat the self loop for $l-L-2$ times, then take $m-1\to 0$ as the last step.
Therefore, every sequence in $X_L$ beginning with $0^L$ lies in $\pi^{-1}[F_m]$. 

This gives \eqref{eq:C_m-projection}. Finally, since the IFS $\{f_0,f_1\}$ satisfies the open set condition, it follows from standard arguments that the Hausdorff dimension of $F_m$ is given by the entropy of $X_L$ over $\log r^{-1}$ (see \cite[Lemma 16.4.2]{URM book} for example), yielding \eqref{eq:C_m-dimension}.
\end{proof}

\begin{figure}
\centering
\begin{tikzpicture}[thick] 
\node[draw,circle] (0) at (4,4) {$0$};
\node[draw,circle] (1) at (2,3) {$1$};
\node[draw,circle] (2) at (6,3) {$2$};
\node[draw,circle] (3) at (1,2) {$3$};
\node[draw,circle] (4) at (3,2) {$4$};
\node[draw,circle] (5) at (5,2) {$5$};
\node[draw,circle] (6) at (7,2) {$6$};
\node (7) at (0,1) {};
\node (8) at (2,1) {};
\node (9) at (4,1) {};
\node (10) at (6,1) {};
\node (11) at (8,1) {};
\node (12) at (1,0.7) {\huge{$\vdots$}};
\node (13) at (4,0.7) {\huge{$\vdots$}};
\node (14) at (7,0.7) {\huge{$\vdots$}};
\draw[->] (0)--(1);
\draw[->] (0)--(2);
\draw[->] (1)--(3);
\draw[->] (1)--(4);
\draw[->] (2)--(5);
\draw[->] (2)--(6);
\draw[->] (3)--(7);
\draw[->] (3)--(8);
\draw[->] (4)--(8);
\draw[->] (4)--(9);
\draw[->] (5)--(9);
\draw[->] (5)--(10);
\draw[->] (6)--(10);
\draw[->] (6)--(11);
\end{tikzpicture}
\caption{Each vertex $j$ for $2^k-1\le j\le 2^{k+1}-2$ may be reached in exactly $k$ steps.}
\label{Directed graph as a tree}
\end{figure}
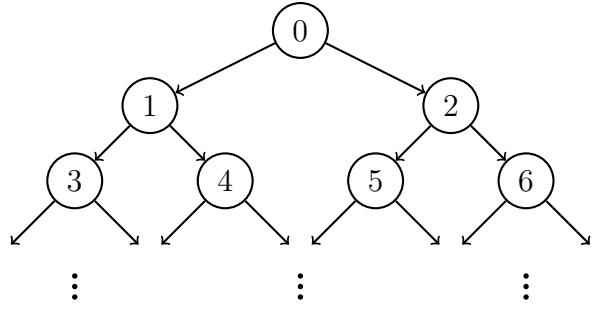

It may be of interest to compare the value $\dim_H F_m=-\log\rho_L/\log r$ with the bounds from Theorem \ref{thm:dimension-bounds} and Corollary \ref{cor:two-maps} for the random case with $N=M=2$ and $p=1/m$. We have made such a comparison in Table \ref{tab:comparisons}. Although there is no reason why the dimension of $F_m$ should lie between the bounds of Theorem \ref{thm:dimension-bounds} (since $F_m$ is but one of uncountably many realizations of the random subset $F(\omega)$), this does appear to be the case for all $m\geq 4$.



\begin{table}
\vspace{5mm}
\begin{tabular}{r|c|c|c|c}
$m$ & $p$ & lower bound & $\dim_H F_m$ & upper bound\\ \hline
2 & $1/2$ & .631 & .631 & .631\\
3 & $1/3$ & .535 & $.438^*$ & .618\\
4 & $1/4$ & .428 & .438 & .599\\
6 & $1/6$ & .296 & .438 & .569\\
7 & $1/7$ & .256 & .348 & .557\\
14 & $1/14$ & .130 & .348 & .503\\
15 & $1/15$ & .121 & .293 & .498\\
30 & $1/30$ & .061 & .256 & .450\\
\end{tabular}
\vspace{5mm}
\caption{Comparison of $\dim_H F_m$ with the upper and lower bounds from Theorem \ref{thm:dimension-bounds} and Corollary \ref{cor:two-maps} with $p=1/m$ and $r=1/3$, for select values of $m$. Note that $\dim_H F_m$ is constant on each interval $2^{L+1}-1\leq m\leq 2^{L+2}-2$, for $L\geq 1$. It appears that only the value for $m=3$ is outside the bounds.}
\label{tab:comparisons} 
\end{table}

\section*{Acknowledgments}

An earlier draft of this paper only considered the case of two maps with a binary tree. We are grateful to the referee for pushing us to investigate the more general case, and for several other insightful comments. We thank Sascha Troscheit for pointing out the connection with branching random walks, which helped visualize the problem. Allaart was partially supported by Simons Foundation grant \# 709869.

\end{document}